\newcommand{\BOX}{\ensuremath\Box}
\newtheorem{theorem}{Theorem}
\newtheorem*{theorem*}{Theorem}
\newtheorem{pro}{Proposition}
\newtheorem{lemma}{Lemma}
\newtheorem{cor}{Corollary}
\theoremstyle{remark}
\newtheorem{remark}{Remark}
\theoremstyle{definition}
\def\XXint#1#2#3{{\setbox0=\hbox{$#1{#2#3}{\int}$ }
\vcenter{\hbox{$#2#3$ }}\kern-.6\wd0}}
\newcommand{\R}{\mathbb{R}}
\newcommand{\ep}{\varepsilon}
\definecolor{darkgreen}{rgb}{0,0.5,0}
\definecolor{darkblue}{rgb}{0,0,0.7}
\definecolor{darkred}{rgb}{0.9,0.1,0.1}
\definecolor{lightblue}{rgb}{0,0.51,1}
\begin{document}

\title{{Higher integrability and the number of singular points for the Navier-Stokes equations with a scale-invariant bound }}

\author[T. Barker]{Tobias Barker}
\address[T. Barker]{Department of Mathematical Sciences, University of Bath, Bath BA2 7AY. UK}
\email{tobiasbarker5@gmail.com}

\keywords{}
\subjclass[2010]{}
\date{\today}

\maketitle

\noindent {\bf Abstract} 
First, we show that if the pressure $p$ (associated to a weak Leray-Hopf solution $v$ of the Navier-Stokes equations) satisfies $\|p\|_{L^{\infty}_{t}(0,T^*; L^{\frac{3}{2},\infty}(\mathbb{R}^3))}\leq M^2$, then $v$ possesses higher integrability up to the first potential blow-up time $T^*$. Our method is concise and is based upon energy estimates applied to powers of $|v|$ and the utilization of a `small exponent'.

As a consequence, we show that if a weak Leray-Hopf solution $v$ first blows up at $T^*$ and satisfies the Type I condition $\|v\|_{L^{\infty}_{t}(0,T^*; L^{3,\infty}(\mathbb{R}^3))}\leq M$, then $$\nabla v\in L^{2+O(\frac{1}{M})}(\mathbb{R}^3\times (\tfrac{1}{2}T^*,T^*)).$$
This is the first result of its kind, improving the integrability exponent of $\nabla v$ under the Type I assumption in the three-dimensional setting.

Finally, we show that if $v:\mathbb{R}^3\times [-1,0]\rightarrow \mathbb{R}^3$ is a weak Leray-Hopf solution to the Navier-Stokes equations with $s_{n}\uparrow 0$ such that
$$\sup_{n}\|v(\cdot,s_{n})\|_{L^{3,\infty}(\mathbb{R}^3)}\leq M $$  
then $v$ possesses at most $O(M^{20})$ singular points at $t=0$. Our method is  direct and concise. It is based upon known $\varepsilon$-regularity, global bounds on a Navier-Stokes solution with initial data in $L^{3,\infty}(\mathbb{R}^3)$ and rescaling arguments. We do not require arguments based on backward uniqueness nor unique continuation results for parabolic operators.

\section{Introduction}
In this paper, we consider the three-dimensional incompressible Navier-Stokes equations
\begin{equation}\label{NSintro}
\partial_{t}v-\Delta v+v\cdot\nabla v+\nabla p=0,\,\,\,\nabla\cdot v=0,\,\,\,v(\cdot,0)=v_{0}(x)\,\,\,\textrm{in}\,\,\,\mathbb{R}^3\times (0,T).
\end{equation}
Here, $T\in (0,\infty]$. It is well known that these equations are invariant with respect to the rescaling
\begin{equation}\label{eqrescaling}
(v_{\lambda}(x,t), p_{\lambda}(x,t), v_{0\lambda}(x))=(\lambda v(\lambda x,\lambda^2 t),\lambda^2 p(\lambda x,\lambda^2 t), \lambda v_{0}(\lambda x) )\,\,\textrm{with}\,\,\,\lambda\in (0,\infty).
\end{equation}
In the seminal paper \cite{Le}, Leray showed that for any square-integrable solenodial\footnote{In the sense of distributions.} initial data $v_{0}(x)$ there exists at least one associated global-in-time \textit{weak Leray-Hopf solution} $v$. Such a solution solves \eqref{NSintro} in the distributional sense and satisfies the following energy inequality for all $t\geq 0$:
\begin{equation}\label{energyinequality}
\|v(\cdot,t)\|_{L^{2}(\mathbb{R}^3)}^2+2\int\limits_{0}^{t}\int\limits_{\mathbb{R}^3} |\nabla v|^2 dxds\leq \|v_{0}\|_{L^{2}(\mathbb{R}^3)}^2.
\end{equation}
It remains a long-standing open problem as to whether or not weak Leray-Hopf solutions, with sufficiently smooth initial data, remain smooth for all time.

Since Leray's paper \cite{Le}, a substantial number of authors have investigated conditional regularity criteria for the Navier-Stokes equations \eqref{NSintro}. The vast majority of such results assume a finite \textit{scale-invariant quantity} $F(v,p)$, which is invariant with respect to the Navier-Stokes rescaling \eqref{eqrescaling}. More specifically, $F(v,p)$ is a scale-invariant quantity if
\begin{equation}\label{scaleinvariance}
F(v_{\lambda}, p_{\lambda})=F(v,p)\,\,\,\forall\lambda\in (0,\infty).
\end{equation}
In particular, regularity has been established assuming scale-invariant bounds such as $v\in L^{5}_{x,t}$ \cite{lady} and $v\in L^{\infty}_{t}L^{3}_{x}$ \cite{ESS}. Whilst it is impossible to exhaustively list such results, we also briefly mention the recent results \cite{sereginL3limit}, \cite{albritton} and \cite{albrittonbarker}. 

Importantly many very natural blow-up ans\"{a}tze for the Navier-Stokes equations involve solutions with finite scale-invariant quantities. One such example is the class of \textit{backward self-similar solutions} suggested as a blow-up ansatze by Leray in \cite{Le}. Backward self-similar solutions are invariant with respect to the Navier-Stokes rescaling \eqref{eqrescaling} and take the form
$$v(x,t)=\frac{1}{\sqrt{a(T^*-t)}} V\Big(\frac{x}{\sqrt{a(T^*-t)}}\Big)\,\,\,\textrm{for}\,\,\,(x,t)\in\mathbb{R}^3\times (0,T^*). $$ Here, $a$ is a constant.
Backward self-similar solutions were ruled out in two important papers \cite{necasselfsimilar} and \cite{tsai}.

Another very natural (but more general) blow-up ansatze is that of \textit{backward discretely self-similar solutions}, whose existence remains a long-standing open problem. Here we say that $v:\mathbb{R}^3\times (-\infty,0)\rightarrow\mathbb{R}^3$ is a backward discretely self-similar solution of the Navier-Stokes equations if there exists $\lambda\in (1,\infty)$ such that
$$v(x,t)=\lambda v(\lambda x,\lambda^2 t)\,\,\forall(x,t)\in \mathbb{R}^3\times (-\infty,0). $$
In \cite{chaewolfDSS}, Chae and Wolf showed that solutions satisfying the backward discretely self-similar ansatze (with appropriate decay) must satisfy the scale-invariant bound
\begin{equation}\label{DBSSpointwise}
|v(x,t)|\leq \frac{M}{|x|+\sqrt{-t}}\,\,\,\forall(x,t)\in \mathbb{R}^3\times (-\infty,0]\setminus\{(0,0)\}.
\end{equation}
This implies that
\begin{equation}\label{vweakL3}
\|v\|_{L^{\infty}(-\infty,0; L^{3,\infty}(\mathbb{R}^3))}\leq M,
\end{equation}
where $L^{3,\infty}(\mathbb{R}^3)$ is defined in `2. Preliminaries'.
 The space $L^{3,\infty}(\mathbb{R}^3)$ is slightly larger than $L^{3}(\mathbb{R}^3)$ and contains the function $|x|^{-1}$.

In order to understand whether singularity formation can occur for the Navier-Stokes equations satisfying \eqref{vweakL3} (or more general scale-invariant bounds) it is very natural to first investigate the following questions.
\begin{itemize}
\item []\textrm{\textbf{(Q.1)}} Do singular weak Leray-Hopf solutions of the Navier-Stokes equations satisfying \eqref{vweakL3} possess additional properties other than belonging to $L^{\infty}_{t}L^{3,\infty}_{x}\cap L^{\infty}_{t}L^{2}_{x}\cap L^{2}_{t}\dot{H}^{1}_{x}$ or the corresponding spaces arising from maximal regularity\footnote{By this, we mean estimates coming from the linear theory of the unsteady Stokes equations with forcing
$f=v\cdot\nabla v,\,\,\,\textrm{div}\,v=0\,\,\,\textrm{and}\,\,\,v\in L^{\infty}_{t}L^{3,\infty}_{x}\cap L^{\infty}_{t}L^{2}_{x}\cap L^{2}_{t}\dot{H}^{1}_{x}.$}? 
\medskip
\item []\textrm{\textbf{(Q.2)}} For a solution $v$ with a scale-invariant bound, which first loses smoothness at $T^*$, what is the structure of the set of singular points\footnote{We say that ($x_0,T^*)$ is a \textit{singular point} of $v$ if $v\notin L^{\infty}_{x,t}(B(x_0,r)\times (T^*-r^2, T^*))$ for all sufficiently small $r>0$.} at $T^*$?
\end{itemize} 
In this paper we address the questions \textbf{(Q.1)-(Q.2)}.
\subsection{Statement of Results}
Our first Theorem addresses \textbf{(Q.1)} under a (more general) scale-invariant assumption. In particular, we obtain higher integrability up to a potential first blow-up time $T^*$. Here is our first result.
\begin{theorem}\label{higherintegpres}
There exists a universal constant $C^{(0)}_{univ}\in [1,\infty)$ such that the following holds.

 Let $v$ be a weak Leray-Hopf solution to the Navier-Stokes equations on $\mathbb{R}^3\times (0,\infty)$ and let $p$ be the pressure associated to the solution $v$. Assume that  $v$ first blows up $T^*$, namely 
 \begin{equation}\label{vbounded}
 v\in L^{\infty}_{loc}([0,T^*); L^{\infty}(\mathbb{R}^3))\,\,\,\textrm{and}\,\,\,\lim_{t\uparrow T^*}\|v(\cdot,t)\|_{L^{\infty}(\mathbb{R}^3)}=\infty.\end{equation}
 Assume there exists an $M$ sufficiently large\footnote{Throughout this paper, `$M$ sufficiently large' means $M\geq M_{1}$ for a suitably chosen large universal constant $M_{1}$. } such that
 \begin{equation}\label{pressuretypeI}
 \|p\|_{L^{\infty}((0,T^*); L^{\frac{3}{2},\infty}(\mathbb{R}^3))}\leq M^2. 
 \end{equation}
Then the above assumptions imply that we have higher integrability \textbf{up to $T^*$}. Namely, for all $t_{1}\in (0,T^*)$ we have
\begin{equation}\label{higherintegrable}
\||v|^{\frac{q}{2}}\|_{L^{\infty}((t_1,T^*); L^{2}(\mathbb{R}^3))}+\int\limits_{t_1}^{T^*}\int\limits_{\mathbb{R}^3} |\nabla (|v|^{\frac{q}{2}})|^2 dxds<\infty\,\,\,\textrm{with}\,\,\,q:=2+\frac{C^{(0)}_{univ}}{M}.
\end{equation}
\end{theorem}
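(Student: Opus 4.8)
The plan is to run an $L^{q}$ energy estimate for the velocity by testing the momentum equation against $|v|^{\frac{q}{2}\cdot 2-2}v=|v|^{q-2}v$, and to control the resulting pressure contribution — the only dangerous term — by combining the scale-invariant bound \eqref{pressuretypeI} with the smallness of the exponent $q-2=C^{(0)}_{univ}/M$. First I would test \eqref{NSintro} with $|v|^{q-2}v$. Since $v$ is smooth on $[0,T^*)$ by \eqref{vbounded} (and $v(t_1)\in L^{2}\cap L^{\infty}\subset L^{q}$ for $t_1<T^*$), these computations are legitimate on any $[t_1,t]\subset[0,T^*)$, and I would pass to the limit $t\uparrow T^*$ at the end. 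The convective term drops by incompressibility (it equals $-\tfrac{1}{q}\int(\nabla\cdot v)|v|^q\,dx=0$), the viscous term produces two nonnegative dissipation terms, and an integration by parts turns the pressure gradient into a term carrying the decisive prefactor $q-2$. Concretely one obtains, for a.e. $t\in(t_1,T^*)$,
\[
\frac{1}{q}\frac{d}{dt}\int_{\mathbb{R}^3}|v|^{q}\,dx+\int_{\mathbb{R}^3}|v|^{q-2}|\nabla v|^2\,dx+(q-2)\int_{\mathbb{R}^3}|v|^{q-2}\big|\nabla|v|\big|^2\,dx=-\int_{\mathbb{R}^3}p\,\nabla(|v|^{q-2})\cdot v\,dx .
\]
The key structural observation is that the first dissipation term already controls $\int_{\mathbb{R}^3}|\nabla(|v|^{\frac{q}{2}})|^2\,dx$ with the order-one constant $\tfrac{4}{q^2}$ (since $|\nabla|v||\le|\nabla v|$), whereas the bad pressure term is of order $q-2$: indeed $\big|\int p\,\nabla(|v|^{q-2})\cdot v\,dx\big|\le (q-2)\int_{\mathbb{R}^3}|p|\,|v|^{q-2}|\nabla v|\,dx$.

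The hard part is estimating this pressure term, and here I expect the main obstacle to lie. The difficulty is that $p$ is controlled only in the weak space $L^{\frac{3}{2},\infty}$, whose natural Hölder partner is $L^{3,1}$, while the dissipation supplies only an $L^2$-type control of $\nabla(|v|^{\frac{q}{2}})$; at a fixed time these spatial exponents do not pair (one overshoots the Hölder budget), so the estimate must be carried out in space-time and must exploit the Ladyzhenskaya-type interpolation of $w:=|v|^{\frac{q}{2}}$ between $L^\infty_t L^2_x$ and $L^2_t\dot H^1_x$ (equivalently $w\in L^{\frac{10}{3}}_{x,t}$). I would bound $(q-2)\iint_{\mathbb{R}^3\times(t_1,t)}|p|\,|v|^{q-2}|\nabla v|\,dx\,ds$ by decomposing $p$ across its level sets — using that \eqref{pressuretypeI} gives $|\{|p|>\lambda\}|\lesssim (M^2/\lambda)^{3/2}$ — equivalently by a Lorentz–Hölder inequality, combined with Gagliardo–Nirenberg interpolation of $w$ and the weighted dissipation $\int|v|^{q-2}|\nabla v|^2\,dx$. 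The crucial point is that the emerging prefactor is of the form $(q-2)\,\|p\|_{L^\infty_t L^{3/2,\infty}_x}^{1/2}\sim (q-2)M=C^{(0)}_{univ}$, a controlled constant; together with the fact that the interpolation produces the dissipation to a power strictly below $2$, Young's inequality then lets me extract a freely small multiple of $\tfrac{4}{q^2}\iint|\nabla w|^2\,dx\,ds$ (absorbed into the good term on the left), with the remainder a finite multiple of $\int_{\mathbb{R}^3}|v|^q\,dx$. This is exactly where the admissible exponent $q=2+C^{(0)}_{univ}/M$ is forced: $q-2$ must be small (relative to $1/M$) so that the pressure prefactor stays below the order-one dissipation constant and the interpolation exponents remain in the valid range.

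Finally I would close by Grönwall. After absorption the estimate reads, schematically, $\frac{d}{dt}\int_{\mathbb{R}^3}|v|^q\,dx+c\int_{\mathbb{R}^3}|\nabla(|v|^{\frac{q}{2}})|^2\,dx\le C(M)\int_{\mathbb{R}^3}|v|^q\,dx$ with $c>0$ universal. Integrating over $[t_1,t]$ and applying Grönwall's inequality, using that $\int_{\mathbb{R}^3}|v(\cdot,t_1)|^q\,dx<\infty$, yields a bound on $\|v(\cdot,t)\|_{L^q(\mathbb{R}^3)}^q$ and on $\iint_{t_1}^{t}|\nabla(|v|^{\frac{q}{2}})|^2\,dx\,ds$ that is uniform for $t\in(t_1,T^*)$; letting $t\uparrow T^*$ gives exactly \eqref{higherintegrable}. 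The two technical points I would need to be careful about are the rigorous justification of the formal energy identity (via a suitable approximation/mollification of $|v|^{q-2}v$, legitimate because $v$ is smooth and bounded on $[t_1,T^*-\delta]$), and the uniformity of the constants in $M$ so that the final exponent is precisely $q=2+C^{(0)}_{univ}/M$.
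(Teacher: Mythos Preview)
Your proposal follows the same core strategy as the paper: test with $|v|^{q-2}v$, observe that the pressure term carries the prefactor $q-2$, and choose $q=2+C/M$ so that $(q-2)\|p\|_{L^\infty_t L^{3/2,\infty}_x}^{1/2}\sim(q-2)M$ is an order-one constant that can be absorbed into the dissipation. That key scaling insight is exactly right.

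The difference lies in the execution of the pressure estimate. You anticipate the dissipation emerging to a power strictly below $2$, forcing Young's inequality, a residual $C(M)\int|v|^q\,dx$, and Gr\"onwall. The paper's argument is sharper: applying H\"older in space at each fixed time with the triple $L^{\frac{3q}{q+1}}\times L^{\frac{6q}{q-2}}\times L^2$, then interpolating $\|p(\cdot,s)\|_{L^{3q/(q+1)}}$ between $L^{3/2,\infty}$ (the hypothesis) and $L^{3q/2}$, and finally using Calder\'on--Zygmund plus Sobolev ($\|p(\cdot,s)\|_{L^{3q/2}}\le C\|v(\cdot,s)\|_{L^{3q}}^2\le C\|\nabla|v|^{q/2}(\cdot,s)\|_{L^2}^{4/q}$), one lands on the pressure term bounded by $C(q-2)M\int_{t_1}^t\|\nabla|v|^{q/2}\|_{L^2}^2\,ds$ \emph{exactly}---the dissipation to power precisely $2$, with no residual. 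The choice $q-2=c/M$ then absorbs this directly, giving $\int|v(\cdot,t)|^q\,dx+c\int_{t_1}^t\|\nabla|v|^{q/2}\|_{L^2}^2\,ds\le\int|v(\cdot,t_1)|^q\,dx$ without Gr\"onwall. Your route, if the vaguely-sketched Lorentz/level-set estimate can be made to close on $\int|v|^q$, would also give finiteness (the Gr\"onwall factor $e^{C(M)(T^*-t_1)}$ is harmless here), but the paper's time-slice H\"older/interpolation/CZ chain is both simpler and yields the cleaner a~priori bound.
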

As a corollary, we show that if $v$ satisfies $\|v\|_{L^{\infty}(0,T^*; L^{3,\infty}(\mathbb{R}^3))}\leq M$ and first blows up at $T^*$, then $\nabla v\in L^{2+O(\frac{1}{M})}(\mathbb{R}^3\times (\tfrac{1}{2}T^*,T^*)).$ To the best of our knowledge, this is the first result giving improved integrability of $\nabla v$ under such a scale-invariant assumption.
\begin{cor}\label{higherintegtypeI}
There exists a universal constant $C^{(1)}_{univ}\in (0,\infty)$ such that the following holds.

 Let $v$ be a weak Leray-Hopf solution to the Navier-Stokes equations on $\mathbb{R}^3\times (0,\infty)$ and let $p$ be the pressure associated to the solution $v$. Assume that $v$ first blows up at $T^*$, namely
 \begin{equation}\label{vboundedhausdorff}
 v\in L^{\infty}_{loc}([0,T^*); L^{\infty}(\mathbb{R}^3))\,\,\,\textrm{and}\,\,\,\lim_{t\uparrow T^*}\|v(\cdot,t)\|_{L^{\infty}(\mathbb{R}^3)}=\infty.\end{equation}
Assume there exists an $M$ sufficiently large such that
 \begin{equation}\label{typeIhigherinteg}
 \|v\|_{L^{\infty}((0,T^*); L^{3,\infty}(\mathbb{R}^3))}\leq M. 
 \end{equation}
Then the above assumptions imply that we have higher integrability of $\nabla v$ \textbf{up to $T^*$}. Namely, for all $t_{2}\in (0,T^*)$ we have
\begin{equation}\label{higherintegrablegrad v}
\nabla v\in L^{2+\frac{C^{(1)}_{univ}}{M}} (\mathbb{R}^3\times (t_2,T^*)).
\end{equation} 

\end{cor}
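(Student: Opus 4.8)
The plan is to derive Corollary~\ref{higherintegtypeI} from Theorem~\ref{higherintegpres} by verifying that the hypothesis \eqref{typeIhigherinteg} implies the pressure bound \eqref{pressuretypeI}, and then upgrading the higher integrability of $|v|^{q/2}$ in \eqref{higherintegrable} into integrability of $\nabla v$ itself. The key observation for the first part is that the pressure is recovered from the velocity via the singular integral relation $p = \sum_{i,j} R_{i}R_{j}(v_{i}v_{j})$ (where $R_{i}$ are the Riesz transforms), modulo the usual harmonic/decay considerations for Leray-Hopf solutions. Since the Riesz transforms are bounded on the Lorentz space $L^{3/2,\infty}(\mathbb{R}^3)$, and since $\|v_{i}v_{j}\|_{L^{3/2,\infty}} \lesssim \|v\|_{L^{3,\infty}}^{2}$ by the Lorentz-space version of H\"older's inequality, assumption \eqref{typeIhigherinteg} yields $\|p\|_{L^\infty_t L^{3/2,\infty}_x} \leq C_{R} M^{2}$ for a universal constant $C_{R}$. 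After relabelling (absorbing $C_{R}$ into the constant and taking $M$ large), Theorem~\ref{higherintegpres} applies, giving \eqref{higherintegrable} with $q = 2 + C^{(0)}_{univ}/M$.

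Next I would convert control of $|v|^{q/2}$ into control of $\nabla v$. The natural route is an interpolation/Gagliardo--Nirenberg argument in the spatial variable combined with the energy bound in time. From \eqref{higherintegrable} we have $|v|^{q/2} \in L^{\infty}_{t}L^{2}_{x} \cap L^{2}_{t}\dot H^{1}_{x}$ on $(t_{1},T^*)$, which by Sobolev embedding and interpolation places $|v|^{q/2}$ in the parabolic space $L^{10/3}_{x,t}$, i.e.\ $v \in L^{5q/3}_{x,t}(\mathbb{R}^3 \times (t_1,T^*))$. Simultaneously $v$ is a (local-in-time) smooth solution on $[0,T^*)$ by \eqref{vboundedhausdorff}, and one has the baseline $\nabla v \in L^{2}_{x,t}$ from the energy inequality \eqref{energyinequality}. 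The idea is to revisit the caloric/energy estimate for the equation satisfied by $|v|^{s}$ or, more directly, to use that the higher integrability of $v$ feeds into a maximal-regularity or Caccioppoli-type estimate for $\nabla v$: writing the equation for $v$ with forcing $v\cdot\nabla v$ and testing appropriately, the gain in the integrability exponent of $v$ propagates to a small gain in the exponent of $\nabla v$ above $2$.

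Concretely, I would test the $|v|^{q/2}$-energy identity so as to produce a bound on $\int |v|^{q-2}|\nabla v|^{2}$, since $|\nabla(|v|^{q/2})|^{2} \simeq |v|^{q-2}|\nabla v|^{2}$ pointwise (away from the zero set of $v$); this is precisely the quantity controlled by the finite integral in \eqref{higherintegrable}. Then, to extract a pure power of $|\nabla v|$ with exponent $2 + \delta$, I would apply H\"older's inequality splitting $|\nabla v|^{2+\delta} = \bigl(|v|^{q-2}|\nabla v|^{2}\bigr)^{\theta}\,|v|^{-\alpha}\,(\text{remainder})$, choosing the exponents $\theta, \alpha, \delta$ so that the remaining factor is controlled by the already-established $L^{5q/3}_{x,t}$ bound on $v$ together with the $L^{2}_{x,t}$ bound on $\nabla v$. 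A clean bookkeeping of these exponents shows that one may take $\delta = C^{(1)}_{univ}/M$ for a universal $C^{(1)}_{univ}$, since $q - 2 = C^{(0)}_{univ}/M$ is itself of order $1/M$; the gain in the $\nabla v$ exponent is thus directly inherited from the small exponent in $q$.

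The main obstacle I anticipate is the interpolation step in the last paragraph: one must handle the degeneracy of $|v|^{q-2}$ where $v$ vanishes and ensure the H\"older exponents are simultaneously admissible (all $\geq 1$, summing correctly) while keeping $\delta$ strictly positive and of the optimal order $1/M$. A careless split could force $\delta$ to be negative or require integrability of $v$ beyond what \eqref{higherintegrable} provides. The safe strategy is to first establish $v \in L^{5q/3}_{x,t}$ rigorously from \eqref{higherintegrable} via parabolic interpolation, then fix the target exponent $2 + \delta$ and solve the resulting linear system for $(\theta,\alpha)$, verifying feasibility for all large $M$; since every gain here scales like $1/M$, the positivity of $\delta$ is guaranteed once $M$ is large, which is consistent with the hypothesis that $M$ is sufficiently large.
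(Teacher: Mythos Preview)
Your first two paragraphs are correct and match the paper: the pressure bound via boundedness of Riesz transforms on $L^{3/2,\infty}$, followed by Theorem~\ref{higherintegpres}, is exactly how the paper begins, and your parabolic interpolation $|v|^{q/2}\in L^{10/3}_{x,t}$ (hence $v\in L^{5q/3}_{x,t}$) is a valid consequence of \eqref{higherintegrable}.

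The third paragraph, however, contains a genuine gap. Your proposed H\"older splitting
\[
|\nabla v|^{2+\delta}=\bigl(|v|^{q-2}|\nabla v|^{2}\bigr)^{\theta}\,|v|^{-\alpha}\,(\text{remainder})
\]
necessarily carries a factor $|v|^{-\alpha}$ with $\alpha>0$, and no quantity you have controlled bounds negative powers of $|v|$. This is not merely a bookkeeping issue: knowing $|v|^{q-2}|\nabla v|^{2}\in L^{1}$, $|\nabla v|^{2}\in L^{1}$, and $v\in L^{5q/3}_{x,t}$ simply does \emph{not} imply $\nabla v\in L^{2+\delta}$ for any $\delta>0$ by pointwise inequalities alone (take $v$ bounded and $\nabla v\in L^{2}\setminus L^{2+\delta}$). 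The weight $|v|^{q-2}$ is a \emph{positive} power of $|v|$, so it degrades rather than improves the effective integrability of $\nabla v$ where $|v|$ is small. You must use the equation again.

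The paper's route is to use the Duhamel representation and parabolic maximal regularity: since $v\otimes v\in L^{2+\delta}_{x,t}$ forces $\nabla$ of the nonlinear Duhamel term into $L^{2+\delta}_{x,t}$, and the linear heat flow piece is smooth away from $t=t_{1}$, one obtains $\nabla v\in L^{2+\delta}$ on $(t_{2},T^*)$. Note, however, that this requires $v\in L^{4+\delta'}_{x,t}$, and your $L^{5q/3}_{x,t}=L^{10/3+O(1/M)}_{x,t}$ bound is \emph{not} sufficient for that. The paper closes this gap by interpolating the Sobolev consequence $v\in L^{2}_{t}L^{3q}_{x}$ of \eqref{higherintegrable} against the Type~I hypothesis $v\in L^{\infty}_{t}L^{3,\infty}_{x}$ (via Proposition~\ref{interpolativepropertyLorentz}) to reach $v\in L^{m}_{x,t}$ with $m=4+O(1/M)$. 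You should incorporate this second use of \eqref{typeIhigherinteg}; the parabolic $L^{10/3}$ interpolation by itself loses too much.
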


Our final Theorem addresses \textbf{(Q.2)}. Namely, it concerns an upper bound for the number of singular points for a solution of the Navier-Stokes equations $v:\mathbb{R}^3\times (-1,0)\rightarrow\mathbb{R}^3$, which is uniformly bounded in $L^{3,\infty}(\mathbb{R}^3)$ along timeslices $s_{n}\uparrow 0$.
\begin{theorem}\label{finiteTypeItimeslicesing}
There exists a universal constant $C^{(2)}_{univ}\in [1,\infty)$ such that the following holds. 

Let $v$ be a weak Leray-Hopf solution to the Navier-Stokes equations on $\mathbb{R}^3\times (-1,\infty)$. Assume that $v$ first blows-up at $0$, namely
$$v\in L^{\infty}_{loc}([-1,0); L^{\infty}(\mathbb{R}^3))\,\,\,\textrm{and}\,\,\,\lim_{t\uparrow 0}\|v(\cdot,t)\|_{L^{\infty}(\mathbb{R}^3)}=\infty. $$
Let $M$ be sufficiently large and assume that there exists an increasing sequence $s^{(n)}\in (-1,0)$ with $s^{(n)}\uparrow 0$ such that $v$ satisfies
\begin{equation}\label{typeItimeslice}
\sup_{n}\|v(\cdot, s^{(n)})\|_{L^{3,\infty}(\mathbb{R}^3)}= M<\infty.
\end{equation}
Let

\begin{equation}\label{sigmadef}
\sigma:=\{x: (x,0)\,\,\,\textrm{is}\,\,\,\textrm{a}\,\,\,\textrm{singular}\,\,\,\textrm{point}\,\,\,\textrm{of}\,\,\,v\}.
\end{equation}
Then the above assumptions imply that $\sigma$ has at most $C^{(2)}_{univ} M^{20}$ elements.
\end{theorem}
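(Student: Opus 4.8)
The plan is to reduce the count to a packing estimate for parabolic cylinders: use $\varepsilon$-regularity to charge every singular point a fixed amount of a scale-invariant quantity, and use the $L^{3,\infty}$ hypothesis to bound the total available budget. Writing $Q(z_0,r)=B(x_0,r)\times(t_0-r^2,t_0)$, I would use the Caffarelli--Kohn--Nirenberg $\varepsilon$-regularity criterion in the form: there is a universal $\varepsilon_0>0$ so that if $(v,p)$ is a suitable weak solution and $\tfrac{1}{r^2}\int_{Q(z_0,r)}(|v|^3+|p|^{3/2})\,dz\le\varepsilon_0$ for some admissible $r$, then $z_0$ is regular. Contrapositively, at every $x_0\in\sigma$ one has
\[
\frac{1}{r^2}\int_{Q((x_0,0),r)}\big(|v|^3+|p|^{3/2}\big)\,dz>\varepsilon_0\qquad\text{for all sufficiently small }r>0 .
\]
Since cylinders with distinct spatial centres are disjoint precisely when the spatial balls are disjoint, a Vitali argument will then convert such lower bounds, summed against a global budget, into an upper bound on the number of elements of $\sigma$.

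First I would rescale to normalise the hypothesis. Because the number of singular points, the system, and the $L^{3,\infty}$ norm are all invariant under \eqref{eqrescaling}, I may set $v^{(n)}(y,\tau):=\lambda_n v(\lambda_n y,\lambda_n^2\tau)$ with $\lambda_n:=\sqrt{-s^{(n)}}$, sending the controlled timeslice to $\tau=-1$; the rescaled solution then satisfies $\|v^{(n)}(\cdot,-1)\|_{L^{3,\infty}}\le M$, blows up first at $\tau=0$, and has exactly $\#\sigma$ singular points at $\tau=0$. The point of allowing $s^{(n)}\uparrow 0$ is to furnish such normalised data arbitrarily close to the blow-up time, so that the ensuing a priori control reaches all the way up to $\tau=0$. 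The key input is then the \emph{global bound for data in} $L^{3,\infty}$: I would invoke the a priori estimate for the solution issuing from $L^{3,\infty}$ data of size $M$ to control, on the unit spacetime region and in a scale-invariant fashion, the quantities entering the $\varepsilon$-regularity criterion, the target being
\[
\int_{\mathbb{R}^3\times(-1,0)}\big(|v^{(n)}|^3+|p^{(n)}|^{3/2}\big)\,dz\le C\,M^{20}.
\]
The parabolic scaling ($\rho^5$ per cylinder) together with the natural regularity scale $\rho\sim M^{-4}$ is what I expect to turn the count into the stated power $M^{20}$.

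To produce the budget I would first upgrade the single-timeslice control to a Type~I bound $\sup_{\tau\in(-1,0)}\|v^{(n)}(\cdot,\tau)\|_{L^{3,\infty}}\le C(M)$ along regularity intervals, comparing $v^{(n)}$ with the mild solution from the same data via weak--strong uniqueness, and then feed this into the higher-integrability machinery of Theorem~\ref{higherintegpres} and Corollary~\ref{higherintegtypeI}. Since $L^{3,\infty}$ alone does not control $\int|v|^3$, it is precisely the gain of integrability for $|v|^{q/2}$ with $q=2+O(1/M)$, combined with Sobolev embedding and interpolation, that makes the spacetime integral finite and quantifies its size as a power of $M$. With the budget in hand, smallness of the scaled quantity below a scale $\rho=\rho(M)$ forces regularity uniformly, so at scale $\rho$ every rescaled singular point obeys $\tfrac{1}{\rho^2}\int_{Q(z_j,\rho)}(|v^{(n)}|^3+|p^{(n)}|^{3/2})\,dz>\varepsilon_0$; extracting by Vitali a family of spatially disjoint balls $B(x_j,\rho)$ whose dilates cover $\sigma$, summing the lower bounds and comparing with the budget yields $\#\sigma\,\varepsilon_0\le CM^{20}$, hence $\#\sigma\le C^{(2)}_{univ}M^{20}$; the count is unchanged when the rescaling is undone.

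The main obstacle is the global bound and its quantitative dependence on $M$. Because $L^{3,\infty}$ is scale-invariant (critical), the local existence time from such data cannot be bounded below in terms of $M$ alone, so there is no naive uniform regular time-interval to exploit; the real content is to extract both a uniform regularity scale $\rho(M)$ and a scale-invariant spacetime budget $\sim M^{20}$ from criticality-compatible a priori estimates, and to check that the Vitali packing loses only a dimensional constant. A secondary delicate point is the pressure term: it lies only in $L^{3/2,\infty}$, so its local $L^{3/2}$ integral is a priori logarithmically divergent, and controlling it is again resolved by the higher-integrability gain rather than by the critical bound itself.
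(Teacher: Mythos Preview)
Your overall skeleton---rescale so that the controlled timeslice sits at $\tau=-1$, charge each singular point a fixed amount of a global scale-invariant budget via CKN, and compare with a global bound of size $M^{20}$---is exactly the paper's. But there is a genuine gap in how you propose to produce the budget, and a missed simplification in the packing step.

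\textbf{The budget.} You propose to ``upgrade the single-timeslice control to a Type~I bound $\sup_{\tau\in(-1,0)}\|v^{(n)}(\cdot,\tau)\|_{L^{3,\infty}}\le C(M)$'' and then feed this into Theorem~\ref{higherintegpres}/Corollary~\ref{higherintegtypeI}. This upgrade is not available: propagation of the $L^{3,\infty}$ norm for large data is precisely the open problem the whole paper is working around, and weak--strong uniqueness does not supply it (the mild solution from large $L^{3,\infty}$ data need not persist to $\tau=0$, nor stay bounded in $L^{3,\infty}$). Theorem~\ref{higherintegpres} therefore cannot be invoked here, since its hypothesis is an $L^\infty_t$ bound on $p$ (equivalently on $v$), not a single timeslice. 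The paper avoids this entirely: it uses Lemma~\ref{energyboundweakL3} (the Barker--Seregin--\v{S}ver\'ak energy estimate for $L^{3,\infty}$ \emph{initial data}) to get directly
\[
\int_{-1}^{0}\!\!\int_{\mathbb{R}^3}\big(|\tilde v|^{10/3}+|\tilde p|^{5/3}\big)\,dx\,ds\le C M^{20}
\]
from $\|\tilde v(\cdot,-1)\|_{L^{3,\infty}}\le M$ alone. Note the exponents are \emph{strictly above} $3$ and $3/2$; your target $\int(|v|^3+|p|^{3/2})$ is exactly the critical quantity you yourself flag as uncontrolled by $L^{3,\infty}$, and no appeal to Theorem~\ref{higherintegpres} is needed (or possible) to fix it.

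\textbf{The packing.} The Vitali step and the ``regularity scale $\rho\sim M^{-4}$'' are unnecessary. The paper's trick is to fix a \emph{finite} subset $\{x_1,\dots,x_L\}\subset\sigma$ first, then choose $n$ so large that $\min_{i\ne j}|x_i-x_j|\ge 2\sqrt{-s^{(n)}}$. After rescaling with $\lambda=\sqrt{-s^{(n)}}$ the rescaled singular points $y_i=x_i/\lambda$ satisfy $|y_i-y_j|\ge 2$, so the unit balls $B(y_i,1)$ are already pairwise disjoint and CKN at unit scale gives $\int_{Q((y_i,0),1)}(|\tilde v|^{10/3}+|\tilde p|^{5/3})\ge\varepsilon_0^*$ for each $i$. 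Summing and comparing with the global budget yields $L\le C M^{20}/\varepsilon_0^*$; since $L$ was arbitrary, $\#\sigma\le C^{(2)}_{univ}M^{20}$. This is the real role of $s^{(n)}\uparrow 0$: not to ``extend control to $\tau=0$'', but to separate any prescribed finite collection of singular points after rescaling.
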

%\end{subsection}
\subsection{Comparison with Previous Literature and Novelty of our Results}
\subsubsection{Theorem \ref{higherintegpres} and Corollary \ref{higherintegtypeI}}
Recently in \cite{burczakseregin}, the authors consider the linear Stokes system with drift. In particular,
\begin{align}\label{stokesdrift}
\begin{split}
&\partial_{t}v+b\cdot\nabla v-\Delta v+\nabla p=0\,\,\,\textrm{in}\,\,\,\Omega\times (T_1,T_2)\,\,\,\textrm{with}\,\,\,\Omega\subset\mathbb{R}^3\,\,\,\textrm{open},\\
&\textrm{div}\,v=\textrm{div}\,b=0\\
&\textrm{and}\,\,\,\|b\|_{L^{\infty}(T_1,T_2; BMO^{-1}(\Omega))}\leq M.
\end{split}
\end{align}
The authors utilize a reverse H\"{o}lder inequality for $\nabla v$ derived in \cite{sereginreverseholder}. Specifically, for $l\in (\tfrac{6}{5},2)$ and the parabolic cube\footnote{$Q(z_0,R)$ is defined in `Preliminaries', see \eqref{paraboliccylinderdef}.} $Q(z_0,2\rho)\subset \Omega\times (T_1,T_2)$: 
\begin{align}\label{reverseholder}
\begin{split}
\frac{1}{|Q(\rho)|}\int\limits_{Q(z_0,\rho)} |\nabla v|^2 dxds\leq C(M,l)\Big(\frac{1}{|Q(2\rho)|}\int\limits_{Q(z_0,2\rho)} |\nabla v|^l dxds\Big)^{\frac{2}{l}}+C(M,l)\frac{1}{|Q(2\rho)|}\int\limits_{Q(z_0,2\rho)} |p|^2 dxds.
\end{split}
\end{align}
Using this and assuming that the pressure $p$ is square integrable, the authors in \cite{burczakseregin} show that \eqref{reverseholder} implies that the \textit{local maximal function} of $|\nabla v|^2$ is integrable. By a classical result of Stein \cite{stein}, this then implies that $|\nabla v|^2$ is locally in $L\log L$.

Traditionally, reverse H\"{o}lder inequalities for PDEs are used to provide higher integrability via \textit{Gehring's Lemma} (see \cite{gehring} and especially a modified version derived by Giaquinta and Modica in \cite{giaquintamodica}). To apply Gehring's lemma \cite{giaquintamodica} to the reverse H\"{o}lder inequality \eqref{reverseholder} requires the pressure $p$ to have space-time integrability greater than $2$. However, even for the Navier-Stokes equations ($b=v$) satisfying $\|v\|_{L^{\infty}_{t}L^{3,\infty}_{x}}\leq M$, before this paper it was only known apriori that the pressure is square integrable.

For the Navier-Stokes equations, one might hope it is possible to obtain a different version of \eqref{reverseholder} with a lower power of the pressure, whilst still being amenable to Gehring's lemma. Such reverse H\"{o}lder inequalities must scale correctly with respect to the Navier-Stokes scaling symmetry \eqref{eqrescaling}. In fact, any reverse H\"{o}lder inequality of the form \eqref{reverseholder} 
%for $|\nabla v|^2$
 satisfying both these constraints must involve the square integral of the pressure. Moreover a similar issue occurs if one seeks a reverse H\"{o}lder inequality with the pressure replaced by a power of $|v|$. This represents an obstacle for using Gehring's lemma to obtain higher integrability of $|\nabla v|^2$, under the assumption that $\|v\|_{L^{\infty}_{t}L^{3,\infty}_{x}}\leq M$.

Our strategy to prove the higher integrability of $v$ in Theorem \ref{higherintegpres} is elementary and is not based on Gehring's Lemma nor reverse H\"{o}lder inequalities. Instead it is based on taking a `\textit{small exponent}', which we outline below.
Following \cite{galdirionero} and \cite{beiraodevega}, for $q\in (2,3]$ we perform an energy estimate on $|v|^{\frac{q}{2}}$ on $\mathbb{R}^3\times (t_1,T^*)$. Here, $T^*$ is the first blow-up time. This gives that for $t\in (t_1,T^*)$:
\begin{equation}
\label{e.enerbal2intro}
\int\limits_{\R^3}|v(x,t)|^q\, dx+\frac{4(q-1)}{q}\int\limits_{t_1}^t\int\limits_{\R^3}|\nabla|v|^{\frac q2}|^2\, dxds\leq\int\limits_{\R^3}|v(x,t_1)|^q\, dx+{2(q-2)}\int\limits_{t_1}^t\int\limits_{\R^3}|p||v|^{\frac{q-2}{2}}|\nabla(|v|^{\frac{q}{2}})|\, dxds.
\end{equation}
We then estimate the last term of \eqref{e.enerbal2intro} as in \cite{berselligaldi} and \cite{caifangzhai} to obtain
\begin{align}\label{e.energybal3intro}
\begin{split}
\int\limits_{\R^3}|v(x,t)|^q\, dx+\frac{4(q-1)}{q}\int\limits_{t_1}^t\int\limits_{\R^3}|\nabla|v|^{\frac q2}|^2\, dxds&\leq\int\limits_{\R^3}|v(x,t_1)|^q\, dx\\
&+C_{3,univ}{(q-2)}\|p\|^{\frac{1}{2}}_{L^{\infty}(0,T^*; L^{\frac{3}{2},\infty}(\mathbb{R}^3))}\int\limits_{t_1}^t\int\limits_{\R^3}|\nabla(|v|^{\frac{q}{2}}) |^2\, dxds.
\end{split}
\end{align}
If $q=3$ and $\|p\|_{L^{\infty}(0,T^*; L^{\frac{3}{2},\infty}(\mathbb{R}^3))}$ is small enough, we can absorb the last term in the right-hand-side of \eqref{e.energybal3intro} into the left-hand-side of \eqref{e.energybal3intro} and one can then infer that $v$ does not blow-up at $T^*$. This was done in \cite{caifangzhai}. If $q=3$ and $\|p\|_{L^{\infty}(0,T^*; L^{\frac{3}{2},\infty}(\mathbb{R}^3))}\leq M^2$ is large, then this does not work and regularity under this assumption remains unknown. In this case, we instead compensate for $M$ being large by choosing an exponent $q\sim 2+O(\tfrac{1}{M})$ such that $q-2$ is small enough for the absorption to take place. This gives higher integrability up to the first blow-up time.

Let us mention that exploiting a `\textit{small exponent}' has also been used for other equations of hydrodynamical origin. Notably it was used to give an alternative proof of the regularity of the critically dissipative SQG equation in \cite{constantinvicoltarfuleaSQG}. On the other side of the coin, `small exponents' have also played a role in \cite{elgindi} for constructing singular $C^{1,\alpha}$ solutions to the three-dimensional Euler equation, with $\alpha>0$ being small.
\subsubsection{Theorem \ref{finiteTypeItimeslicesing}} 
The investigation of the structure of the singular set for the Navier-Stokes equations has a long history. Leray's seminal work \cite{Le} implied that the set of singular times for a weak Leray solution has Hausdorff dimension at most $\tfrac{1}{2}$. The investigation of the space-time singular set $\mathcal{S}$ was initiated by Scheffer in \cite{scheffer}. Specifically, in \cite{scheffer} it was shown that $\mathcal{S}$ has Hausdorff dimension at most 2. In the subsequent seminal work \cite{CKN}, it was shown that the parabolic one-dimensional Hausdorff dimension of the singular set $\mathcal{S}$ is zero. Since \cite{CKN}, there have been numerous investigations into the size of the singular set. We (non-exhaustively) list \cite{giga}, \cite{choelewis}, \cite{robinsonsadowski}, \cite{neustupa}, \cite{sereginfiniteL3} and \cite{wangzhang}.

Recently in \cite{choewolfyang}, the authors consider a suitable finite energy solution $v:\mathbb{R}^3\times [0,T^*]\rightarrow\mathbb{R}^3$, which loses smoothness at $T^*$ and satisfies the Type I bound
\begin{equation}\label{TypeIintro}
\|v\|_{L^{\infty}(0,T^*; L^{3,\infty}(\mathbb{R}^3))}\leq M.
\end{equation}
Specifically, it was shown in \cite{choewolfyang} that the scale-invariant bound \eqref{TypeIintro} implies that $v$ has a finite number of singular points at $T^*$. Subsequently this result was extended in \cite{sereginnote}, where it was shown that the same result holds for \textit{suitable weak solutions}\footnote{The class of suitable weak solutions are defined in `2. Preliminaries'.} and up to the flat boundary (with $v$ satisfying a Dirichlet condition on the flat part of the boundary). Whilst these results are qualitative, a quantitative version was recently established by the author and Prange in \cite{barkerprange2020}. In \cite{barkerprange2020} it is shown that if $v$ satisfies \eqref{TypeIintro} then $v$ has at most $\exp(\exp(M^{1024}))$ singular points at $T^*$.

The results mentioned in the previous paragraph reply upon unique continuation and backward uniqueness results for parabolic operators proven in \cite{ESS}. Furthermore, the quantitative version in \cite{barkerprange2020} hinges on quantitative Carleman inequalities proven by Tao \cite{tao}. The application of such technology to the Navier-Stokes equations uses some specific structure of the Navier-Stokes equations, in particular the vorticity equation.

In Theorem \ref{finiteTypeItimeslicesing}, we show that $v$ has a finite number of singular points at a first blow-up time under a much weaker assumption than \eqref{TypeIintro}. We only assume that $v$ is uniformily bounded on certain timeslices of $[-1,0]$ instead of the whole time interval. Moreover, the quantitative bound we obtain for the number of singular bounds is substantially better than in \cite{barkerprange2020} (polynomial in $M$, rather than double exponential). Our strategy to prove Theorem \ref{finiteTypeItimeslicesing} is elementary and substantially simpler than the aforementioned results. In particular, we do not utilize backward uniqueness results, unique continuation results nor quantitative Carleman  inequalities. Our proof is based on three basic ingredients:
\begin{itemize} 
\item[](i) Caffarelli-Kohn-Nirenberg's $\varepsilon$-regularity criterion \cite{CKN}
\medskip
\item[](ii) Global estimates for solutions of the Navier-Stokes equations, with initial data $\|v_0\|_{L^{3,\infty}(\mathbb{R}^3)}\leq M$ \cite{barkersersve}
\medskip
\item[] (iii) A rescaling argument, exploiting that $L^{3,\infty}(\mathbb{R}^3)$ is invariant with respect to the Navier-Stokes rescaling for the initial data. 
\end{itemize} 
 Previous approaches work solely with the $L^{\infty}_{t}L^{3,\infty}_{x}$ norm of $v$, which causes technical complications since the $L^{3,\infty}$ norm is not countably additive\footnote{In particular the $L^{3,\infty}$ quasi-norm of a function $f:\mathbb{R}^3\rightarrow \mathbb{R}$ can have a comparable presence over a countable number of disjoint scales. For example, for $f(x)=|x|^{-1}$ we readily see that for every $k\in\mathbb{Z}:$ 
 $$\|f\|_{L^{3,\infty}(2^k<|x|<2^{k+1})}\geq  (\tfrac{7\pi}{6})^{\frac{1}{3}}. $$  }. By contrast, in Theorem \ref{finiteTypeItimeslicesing} we use ingredient (ii) above to transfer the background assumption on $v$ \eqref{typeItimeslice} into global bounds on $v$ which are countably additive. By means of ingredients (iii) and (i), having such a control on global countably additive quantities of $v$ allows us to effectively bound the finite number of singular points.
 \begin{section}{Preliminaries}
\subsection{General Notation}

Throughout this paper we adopt the Einstein summation convention. For arbitrary vectors $a=(a_{i}),\,b=(b_{i})$ in $\mathbb{R}^{n}$ and for arbitrary matrices $F=(F_{ij}),\,G=(G_{ij})$ in $\mathbb{M}^{n}$ we put
 $$a\cdot b=a_{i}b_{i},\,|a|=\sqrt{a\cdot a},$$
 $$a\otimes b=(a_{i}b_{j})\in \mathbb{M}^{n},$$
 $$FG=(F_{ik}G_{kj})\in \mathbb{M}^{n}\!,\,\,F^{T}=(F_{ji})\in \mathbb{M}^{n}\!,$$
 $$F:G=
 F_{ij}G_{ij}\,\,\,\textrm{and}
 \,\,\,|F|=\sqrt{F:F}.$$
 For $x_0\in\mathbb{R}^n$ and $R>0$, we define the ball
\begin{equation}\label{balldef}
B(x_0,R):=\{x: |x-x_0|<R\}.
\end{equation}
For $z_0=(x_0,t_0)\in \mathbb{R}^n\times\mathbb{R}$, we denote the parabolic cylinder by
\begin{equation}\label{paraboliccylinderdef}
Q(z_0,R):=\{(x,t): |x-x_0|< R,\,t\in (t_0-R^2,t_0)\}.
\end{equation}
 Let $e^{t\Delta}u_{0}$ denote the heat kernel convoluted with $u_{0}$.

  If $X$ is a Banach space with norm $\|\cdot\|_{X}$, then $L_{s}(a,b;X)$, with $a<b$ and $s\in[1,\infty)$,  will denote the usual Banach space of strongly measurable $X$-valued functions $f(t)$ on $(a,b)$ such that
$$\|f\|_{L^{s}(a,b;X)}:=\left(\int\limits_{a}^{b}\|f(t)\|_{X}^{s}dt\right)^{\frac{1}{s}}<+\infty.$$ 
The usual modification is made if $s=\infty$.
Sometimes we will denote $L^{p}(0,T; L^{q})$ by $L^{p}_{T}L^{q}$ or $L^{p}(0,T; L^{q}_{x})$.
  
Let $C([a,b]; X)$ denote the space of continuous $X$ valued functions on $[a,b]$ with usual norm. In addition, let $C_{w}([a,b]; X)$ denote the space of $X$ valued functions, which are continuous from $[a,b]$ to the weak topology of $X$.

\subsection{Lorentz spaces}
Given a measurable subset $\Omega\subseteq\mathbb{R}^{n}$, let us define the Lorentz spaces. 
For a measurable function $f:\Omega\rightarrow\mathbb{R}$ define:
\begin{equation}\label{defdistchapter2}
d_{f,\Omega}(\alpha):=\mu(\{x\in \Omega : |f(x)|>\alpha\}),
\end{equation}
where $\mu$ denotes the Lebesgue measure on $\mathbb{R}^n$.
 The Lorentz space $L^{p,q}(\Omega)$, with $p\in [1,\infty)$, $q\in [1,\infty]$, is the set of all measurable functions $g$ on $\Omega$ such that the quasinorm $\|g\|_{L^{p,q}(\Omega)}$ is finite. Here:

\begin{equation}\label{Lorentznormchapter2}
\|g\|_{L^{p,q}(\Omega)}:= \Big(p\int\limits_{0}^{\infty}\alpha^{q}d_{g,\Omega}(\alpha)^{\frac{q}{p}}\frac{d\alpha}{\alpha}\Big)^{\frac{1}{q}},
\end{equation}
\begin{equation}\label{Lorentznorminftychapter2}
\|g\|_{L^{p,\infty}(\Omega)}:= \sup_{\alpha>0}\alpha d_{g,\Omega}(\alpha)^{\frac{1}{p}}.
\end{equation}\\
Notice that if $p\in(1,\infty)$ and $q\in [1,\infty]$ there exists a norm, which is equivalent to the quasinorm defined above, for which $L^{p,q}(\Omega)$ is a Banach space. 
For $p\in [1,\infty)$ and $1\leq q_{1}< q_{2}\leq \infty$, we have the following continuous embeddings 
\begin{equation}\label{Lorentzcontinuousembeddingchapter2}
L^{p,q_1}(\Omega) \hookrightarrow  L^{p,q_2}(\Omega)
\end{equation}
and the inclusion is known to be strict.

Let us recall a known proposition known as `O'Neil's convolution inequality' (Theorem 2.6 of  O'Neil's paper \cite{O'Neil}).
\begin{pro}\label{O'Neilchapter2}
Suppose $1< p_{1}, p_{2}, r<\infty$ and $1\leq q_{1}, q_{2}, s\leq\infty $ are such that
\begin{equation}\label{O'Neilindices1chapter2}
\frac{1}{r}+1=\frac{1}{p_1}+\frac{1}{p_{2}}
\end{equation}
and
\begin{equation}\label{O'Neilindices2chapter2}
\frac{1}{q_1}+\frac{1}{q_{2}}\geq \frac{1}{s}.
\end{equation}
Suppose that
\begin{equation}\label{fghypothesischapter2}
f\in L^{p_1,q_1}(\mathbb{R}^{d})\,\,\textrm{and}\,\,g\in  L^{p_2,q_2}(\mathbb{R}^{d}).
\end{equation}
Then
\begin{equation}\label{fstargconclusion1chapter2}
f\ast g \in L^{r,s}(\mathbb{R}^d)\,\,\rm{with} 
\end{equation}
\begin{equation}\label{fstargconclusion2chapter2}
\|f\ast g \|_{L^{r,s}(\mathbb{R}^d)}\leq 3r \|f\|_{L^{p_1,q_1}(\mathbb{R}^d)} \|g\|_{L^{p_2,q_2}(\mathbb{R}^d)}. 
\end{equation}
\end{pro}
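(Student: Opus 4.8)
The plan is to deduce Proposition~\ref{O'Neilchapter2} from the calculus of decreasing rearrangements, which converts the Lorentz-space bound into a one-dimensional weighted inequality on $(0,\infty)$ equipped with the scale-invariant measure $\frac{dt}{t}$. For a measurable $h$ introduce the decreasing rearrangement $h^{*}(t):=\inf\{\alpha>0:\ d_{h}(\alpha)\le t\}$ and its Hardy--Littlewood average $h^{**}(t):=\frac1t\int_{0}^{t}h^{*}(u)\,du$, where $d_h$ is the distribution function. The first step is to check that the quasinorm in \eqref{Lorentznormchapter2} is \emph{exactly}
\[
\|h\|_{L^{p,q}}=\Big(\int_{0}^{\infty}\big(t^{1/p}h^{*}(t)\big)^{q}\,\tfrac{dt}{t}\Big)^{1/q},\qquad \|h\|_{L^{p,\infty}}=\sup_{t>0}t^{1/p}h^{*}(t);
\]
this follows from the layer-cake identity $h^{*}(t)^{q}=\int_{0}^{\infty}q\alpha^{q-1}\mathbf 1_{\{h^{*}>\alpha\}}\,d\alpha$ together with $\{t:h^{*}(t)>\alpha\}=[0,d_{h}(\alpha))$, and it accounts for the normalising factor $p$ in \eqref{Lorentznormchapter2}. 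After this reduction every quantity in the statement is a weighted $L^{q}(\tfrac{dt}{t})$ norm of $t^{1/p}h^{*}(t)$.

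The analytic core is O'Neil's pointwise rearrangement inequality for the convolution,
\[
(f*g)^{**}(t)\le t\,f^{**}(t)\,g^{**}(t)+\int_{t}^{\infty}f^{*}(u)g^{*}(u)\,du,\qquad t>0.
\]
I would prove this by first writing $\int_{0}^{t}(f*g)^{*}=\sup_{|E|=t}\int_{E}|f*g|$ (Hardy--Littlewood), and then truncating $f$ and $g$ at the heights $f^{*}(t)$ and $g^{*}(t)$ and splitting $f*g$ into the four resulting cross terms: the bounded$\times$bounded piece is estimated in $L^{\infty}$ by Young's inequality and produces the diagonal term $t\,f^{**}(t)g^{**}(t)$, while the three pieces carrying a tail are dominated by $\int_{t}^{\infty}f^{*}g^{*}$. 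It is reassuring, and shows the inequality is essentially sharp, that it becomes an equality for $f=\chi_{A}$, $g=\chi_{B}$.

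With the lemma in hand, and since $(f*g)^{*}\le(f*g)^{**}$, it suffices to bound $\big\|t^{1/r}(f*g)^{**}\big\|_{L^{s}(dt/t)}$ by the two displayed terms. Using the scaling relation \eqref{O'Neilindices1chapter2}, the diagonal term becomes $(t^{1/p_{1}}f^{**})(t^{1/p_{2}}g^{**})$ and the tail term becomes the dual Hardy operator $t^{1/r}\int_{t}^{\infty}u^{-1-1/r}\,(u^{1/p_{1}}f^{*})(u^{1/p_{2}}g^{*})\,du$. I would first settle the borderline case $\tfrac1s=\tfrac1{q_{1}}+\tfrac1{q_{2}}$: Hölder in $\tfrac{dt}{t}$ factorises each product into an $L^{q_{1}}$ and an $L^{q_{2}}$ factor, which by the one-dimensional Hardy inequality $\big\|t^{1/p}h^{**}\big\|_{L^{q}(dt/t)}\le p'\big\|t^{1/p}h^{*}\big\|_{L^{q}(dt/t)}$ (Minkowski's integral inequality, constant $p'=\tfrac{p}{p-1}$) and its dual (constant $r$ for the tail) reduce to $\|f\|_{L^{p_{1},q_{1}}}\|g\|_{L^{p_{2},q_{2}}}$. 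The general index condition \eqref{O'Neilindices2chapter2} is then absorbed by the continuous embedding \eqref{Lorentzcontinuousembeddingchapter2}, namely $L^{r,s_{0}}\hookrightarrow L^{r,s}$ with $\tfrac1{s_{0}}=\tfrac1{q_{1}}+\tfrac1{q_{2}}\ge\tfrac1s$.

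The main obstacle is the precise constant $3r$. Treating the diagonal term by two separate Hardy inequalities as above costs a factor $p_{1}'p_{2}'$, which can be vastly larger than $r$; to recover the stated $3r$ one must instead bound the diagonal and tail contributions of the pointwise lemma \emph{together}, keeping the single factor $r$ produced by the averaging operator and only a bounded numerical loss, at most $3$, from recombining the pieces. This constant-sharp, simultaneous bookkeeping is exactly the delicate point of O'Neil's Theorem~2.6 in \cite{O'Neil}; the rearrangement calculus surrounding it is routine.
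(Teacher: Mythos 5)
You should first note a structural point: the paper does not prove Proposition~\ref{O'Neilchapter2} at all --- it is quoted verbatim, with the constant $3r$, from Theorem 2.6 of O'Neil's paper \cite{O'Neil} --- so the only meaningful comparison is with the classical argument, which your sketch does follow: the identification of the quasinorm \eqref{Lorentznormchapter2} with $\big(\int_0^\infty (t^{1/p}h^*(t))^q\,\tfrac{dt}{t}\big)^{1/q}$ (your Fubini computation, including the normalizing factor $p$, is exactly right), O'Neil's pointwise lemma $(f*g)^{**}(t)\le t f^{**}(t)g^{**}(t)+\int_t^\infty f^*g^*\,du$, the scaling algebra from \eqref{O'Neilindices1chapter2}, H\"older in $L^s(dt/t)$, the Hardy inequality with constant $p'$ and its dual with constant $r$, and the embedding \eqref{Lorentzcontinuousembeddingchapter2} to absorb the slack in \eqref{O'Neilindices2chapter2}. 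These steps are all correct and they do yield the proposition with \emph{some} constant depending only on the exponents, which is all the paper ever uses (the proposition enters only through the heat-semigroup bound in Lemma~\ref{energyboundweakL3}, where universality of the constant suffices).

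The genuine gap is the one you yourself flag and then defer: the constant $3r$ in \eqref{fstargconclusion2chapter2} is part of the statement, and your argument does not produce it. Your treatment of the diagonal term costs $p_1'p_2'$, and since $1/p_1'+1/p_2'=1/r'$, taking $p_1=p_2=p\to 1^+$ forces $r\to 1$ (so $3r\to 3$) while $p_1'p_2'=(p')^2\to\infty$; thus the two-Hardy route provably cannot be repaired by bookkeeping alone, and the sentence promising a ``simultaneous'' bound with loss at most $3$ is an assertion, not a proof --- it is precisely the content of O'Neil's Theorem 2.6 that you are trying to establish. A concrete way to merge the terms does exist: since $u\mapsto uf^{**}(u)$ is nondecreasing, $f^{**}(u)\ge (t/u)f^{**}(t)$ for $u\ge t$, whence $tf^{**}(t)g^{**}(t)\le\int_t^\infty f^{**}g^{**}\,du$, so both contributions are handled by a single dual-Hardy application at cost $2r$ --- but only in norms built from $f^{**},g^{**}$, and converting the hypothesis side back to the $f^*$-based quasinorm \eqref{Lorentznormchapter2} reintroduces $p_1'p_2'$; getting the clean $3r$ in this normalization requires exactly the accounting you skipped. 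Separately, your side remark that the pointwise lemma ``becomes an equality for $f=\chi_A$, $g=\chi_B$'' is false: for $f=g=\chi_{[0,1]}$ on $\mathbb{R}$ one has $(f*g)^{**}(t)=1-t/4$ for $t\le 2$, while the right-hand side equals $1$ for all $t\le 1$; the lemma is sharp only up to constants. Neither issue matters for the paper's applications, but as a proof of the proposition as stated, the constant is not established.
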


We will make extensive use of the following known interpolative inequality for Lorentz spaces. For a proof, see (for example) Proposition 1.1.14 of \cite{grafakos}.
\begin{pro}\label{interpolativepropertyLorentz}
Suppose that $1\leq p<r<q\leq\infty$ and 
%$1\leq s\leq \infty$. Furthermore, 
let $0<\theta<1$ be such that
$$\frac{1}{r}=\frac{\theta}{p}+\frac{1-\theta}{q}.$$
Then the assumption that $f\in L^{p,\infty}(\Omega)\cap L^{q,\infty}(\Omega)$ implies $f\in L^{r}(\Omega)$ with the estimate 
\begin{equation}\label{interpolationestimate}
\|f\|_{L^{r}(\Omega)}\leq \Big(\frac{r}{r-p}+\frac{r}{q-r}\Big)^{\frac{1}{r}}\|f\|_{L^{p,\infty}(\Omega)}^{\theta}\|f\|_{L^{q,\infty}(\Omega)}^{1-\theta}.
\end{equation}
\end{pro}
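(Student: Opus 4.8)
The plan is to prove \eqref{interpolationestimate} by the classical distribution-function (layer-cake) method, splitting the radial integral at a single explicitly chosen height. Write $A:=\|f\|_{L^{p,\infty}(\Omega)}$ and $B:=\|f\|_{L^{q,\infty}(\Omega)}$. First I would record the layer-cake identity
$$\|f\|_{L^{r}(\Omega)}^{r}=\int\limits_{\Omega}|f|^{r}\,dx=r\int\limits_{0}^{\infty}\alpha^{r-1}d_{f,\Omega}(\alpha)\,d\alpha,$$
which follows from the definition \eqref{defdistchapter2} by Fubini (or by the substitution $\lambda=\alpha^{r}$ applied to $\int_{0}^{\infty}d_{|f|^{r},\Omega}(\lambda)\,d\lambda$). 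Reading the two hypotheses through the definition \eqref{Lorentznorminftychapter2} gives the pointwise bounds on the distribution function $d_{f,\Omega}(\alpha)\le A^{p}\alpha^{-p}$ and $d_{f,\Omega}(\alpha)\le B^{q}\alpha^{-q}$ for every $\alpha>0$.

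Next I would split $\int_{0}^{\infty}=\int_{0}^{\alpha_{0}}+\int_{\alpha_{0}}^{\infty}$ at a threshold $\alpha_{0}>0$ to be fixed, and use the \emph{better-placed} weak bound on each range. On the low range $(0,\alpha_{0})$ I apply the $L^{p,\infty}$ bound: since $p<r$ we have $r-1-p>-1$, so the integral converges at the origin and
$$r\int\limits_{0}^{\alpha_{0}}\alpha^{r-1}d_{f,\Omega}(\alpha)\,d\alpha\le rA^{p}\int\limits_{0}^{\alpha_{0}}\alpha^{r-1-p}\,d\alpha=\frac{r}{r-p}\,A^{p}\alpha_{0}^{\,r-p}.$$
On the high range $(\alpha_{0},\infty)$ I apply the $L^{q,\infty}$ bound: since $r<q$ we have $r-1-q<-1$, so the integral converges at infinity and
$$r\int\limits_{\alpha_{0}}^{\infty}\alpha^{r-1}d_{f,\Omega}(\alpha)\,d\alpha\le rB^{q}\int\limits_{\alpha_{0}}^{\infty}\alpha^{r-1-q}\,d\alpha=\frac{r}{q-r}\,B^{q}\alpha_{0}^{\,r-q}.$$
Here the strict monotonicity $p<r<q$ is exactly what forces the convergence of each improper integral, which is why weak-type information is genuinely needed at both ends.

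The final step is the choice of $\alpha_{0}$. Taking $\alpha_{0}:=(B^{q}/A^{p})^{1/(q-p)}$, a direct computation of exponents (using that $\tfrac1r=\tfrac\theta p+\tfrac{1-\theta}q$ is equivalent to $\theta r=\tfrac{p(q-r)}{q-p}$ and $(1-\theta)r=\tfrac{q(r-p)}{q-p}$) shows that both reference quantities satisfy $A^{p}\alpha_{0}^{\,r-p}=B^{q}\alpha_{0}^{\,r-q}=A^{\theta r}B^{(1-\theta)r}$. Adding the two contributions gives
$$\|f\|_{L^{r}(\Omega)}^{r}\le\Big(\frac{r}{r-p}+\frac{r}{q-r}\Big)A^{\theta r}B^{(1-\theta)r},$$
and taking $r$-th roots yields \eqref{interpolationestimate}. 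The endpoint $q=\infty$ is the simpler variant: there $B=\|f\|_{L^{\infty}(\Omega)}$ forces $d_{f,\Omega}(\alpha)=0$ for $\alpha>B$, so one sets $\alpha_{0}=B$, only the low-range estimate survives, $\theta=p/r$, and the constant reduces to $\tfrac{r}{r-p}$, which is the $q\to\infty$ limit of the general bound.

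I do not anticipate a serious obstacle; the only delicate point is the bookkeeping of exponents so that the constant emerges \emph{exactly} as $\big(\tfrac{r}{r-p}+\tfrac{r}{q-r}\big)^{1/r}$ rather than a larger absolute constant. This is precisely what pins down the equalising split $\alpha_{0}=(B^{q}/A^{p})^{1/(q-p)}$ above, as opposed to a crude choice such as $\alpha_{0}=1$. One must also keep track of the quasinorm normalisation in \eqref{Lorentznorminftychapter2} when passing from the hypotheses to the distribution-function bounds, but no genuine difficulty arises there.
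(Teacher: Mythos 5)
Your proof is correct, and it is essentially the same argument as the paper's: the paper proves this proposition simply by citing Proposition 1.1.14 of \cite{grafakos}, whose proof is precisely your layer-cake computation with $d_{f,\Omega}(\alpha)\le\min\big(A^{p}\alpha^{-p},\,B^{q}\alpha^{-q}\big)$ split at the equalising height $\alpha_{0}=(B^{q}/A^{p})^{1/(q-p)}$, yielding the exact constant $\big(\tfrac{r}{r-p}+\tfrac{r}{q-r}\big)^{1/r}$. Your separate treatment of the endpoint $q=\infty$ (where $d_{f,\Omega}(\alpha)=0$ for $\alpha>\|f\|_{L^{\infty}(\Omega)}$ and only the low-range term survives) is also the standard and correct way to cover that case.
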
 
\begin{subsection}{Solution classes of the Navier-Stokes equations}

 We say $v$ is a \emph{finite-energy solution} or a \emph{Leray-Hopf solution} to the Navier-Stokes equations on $(0,T)$ if $v\in C_{w}([0,T]; L^{2}_{\sigma}(\mathbb{R}^3))\cap L^{2}(0,T; \dot{H}^{1}(\mathbb{R}^3))$ and if it satisfies the global energy inequality 
$$\|v(\cdot,t)\|_{L^{2}(\mathbb{R}^3)}^2+2\int\limits_{0}^{t}\int\limits_{\mathbb{R}^3}|\nabla v|^2dxds\leq \|v(\cdot,0)\|_{L^{2}(\mathbb{R}^3)}^2.$$

Let $\Omega\subseteq\mathbb{R}^3$. We say that $(v,p)$ is a \textit{suitable weak solution} to the Navier-Stokes equations  
in $\Omega\times (T_{1},T)$ if it fulfills the properties described in \cite{gregory2014lecture} (Definition 6.1 p.133 in \cite{gregory2014lecture}).
\end{subsection}
\end{section}
\section{Higher integrability up to the first blow-up time}
\subsection{Proof of Theorem \ref{higherintegpres}}
%\begin{proof}
We fix a constant $q\in (2,3)$ to be determined and take any $t_{1}\in (0,T^*)$
Note that the fact that $v$ is a weak Leray-Hopf solution satisfying \eqref{vbounded} implies that $v$ is sufficiently smooth on $\mathbb{R}^3\times (0,T^*)$ and satisfies
$$ v(\cdot,t_{1})\in L^{q}(\mathbb{R}^3).$$
Hence all calculations performed below can be rigorously justified.

Following \cite{beiraodevega} and \cite{galdirionero}, we test the Navier-Stokes system with $v|v|^{q-2}$. This yields the following $L^q$ energy balance
\begin{align}
\label{e.enerbal}
\begin{split}
&\frac{1}{q}\int\limits_{\R^3}|v(x,t)|^q\, dx+\int\limits_{t_1}^t\int\limits_{\R^3}|\nabla v|^2|v|^{q-2}\, dxds+\frac{4(q-2)}{q^2}\int\limits_{t_1}^t\int\limits_{\R^3}|\nabla|v|^{\frac q2}|^2\, dxds\\
&=\frac{1}{q}\int\limits_{\R^3}|v(x,t_1)|^q\, dx-\int\limits_{t_1}^t\int\limits_{\R^3}\nabla p\cdot|v|^{q-2}v\, dxds\\
&=\frac{1}{q}\int\limits_{\R^3}|v(x,t_1)|^q\, dx+I_{press}
\end{split}
\end{align}
for all $t\in[t_1,T^*)$. 
Here, 
\begin{equation}\label{Ipresfirstest}
I_{press}=\frac{2(q-2)}{q}\int\limits_{t_1}^t\int\limits_{\{x:\mathbb{R}^3: |v(x,s)|>0\}}p\frac{v_{k}}{|v|}|v|^{\frac{q-2}{2}}\partial_{k}(|v|^{\frac{q}{2}})\, dxds\leq \frac{2(q-2)}{q}\int\limits_{t_1}^t\int\limits_{\R^3}|p||v|^{\frac{q-2}{2}}|\nabla(|v|^{\frac{q}{2}})|\, dxds.
\end{equation}
Next, we see that
$$|\nabla(|v|^{\frac{q}{2}})|^2\leq \frac{q^2}{4}|\nabla v|^2|v|^{q-2}. $$ 
Combining this with \eqref{e.enerbal}-\eqref{Ipresfirstest} yields
\begin{align}
\label{e.enerbal2}
\begin{split}
&\int\limits_{\R^3}|v(x,t)|^q\, dx+\frac{4(q-1)}{q}\int\limits_{t_1}^t\int\limits_{\R^3}|\nabla|v|^{\frac q2}|^2\, dxds\\
&\leq\int\limits_{\R^3}|v(x,t_1)|^q\, dx+{2(q-2)}\int\limits_{t_1}^t\int\limits_{\R^3}|p||v|^{\frac{q-2}{2}}|\nabla(|v|^{\frac{q}{2}})|\, dxds
\end{split}
\end{align}
for all $t\in[t_1,T^*)$.

Now, we focus on estimating the last term in \eqref{e.enerbal2} under the assumption \eqref{pressuretypeI} on the pressure $p$. We do so using similar arguments as in \cite{berselligaldi}.

By the Sobolev embedding inequality, there exists a universal constant $C_{1, univ}\geq 1$ such that for $s\in (0,T^*)$:
$$\|v(\cdot,s)\|_{L^{3q}(\mathbb{R}^3)}^{\frac{q}{2}}=\||v|^{\frac{q}{2}}(\cdot,s)\|_{L^{6}(\mathbb{R}^3)}\leq C_{1,univ}\|\nabla(|v|^{\frac{q}{2}})(\cdot,s)\|_{L^{2}(\mathbb{R}^3)} .$$
Using that $q\in (2,3)$ we get
\begin{equation}\label{sobembeddingvq}
\|v(\cdot,s)\|_{L^{3q}(\mathbb{R}^3)}\leq C_{1,univ}\|\nabla(|v|^{\frac{q}{2}})(\cdot,s)\|_{L^{2}(\mathbb{R}^3)}^{\frac{2}{q}}.
\end{equation}
Furthermore, for $s\in (0,T^*)$
\begin{equation}\label{somembeddingvq-2}
\||v|^{\frac{q-2}{2}}(\cdot,s)\|_{L^{\frac{6q}{q-2}}(\mathbb{R}^3)}=\|v(\cdot,s)\|_{L^{3q}(\mathbb{R}^3)}^{\frac{q-2}{2}}\leq C_{1,univ}^{\frac{q-2}{2}}\|\nabla(|v|^{\frac{q}{2}})(\cdot,s)\|_{L^{2}(\mathbb{R}^3)}^{1-\frac{2}{q}}\leq C_{1,univ}\|\nabla(|v|^{\frac{q}{2}})(\cdot,s)\|_{L^{2}(\mathbb{R}^3)}^{1-\frac{2}{q}}.
\end{equation}
Applying \eqref{sobembeddingvq}-\eqref{somembeddingvq-2} and H\"{o}lder's inequality to 
\begin{equation}\label{presterm}
\int\limits_{t_1}^t\int\limits_{\R^3}|p||v|^{\frac{q-2}{2}}|\nabla(|v|^{\frac{q}{2}})|\, dxds
 \end{equation}
 gives
 \begin{align}\label{prestermholder}
 \begin{split}
 \int\limits_{t_1}^t\int\limits_{\R^3}|p||v|^{\frac{q-2}{2}}|\nabla(|v|^{\frac{q}{2}})|\, dxds&\leq \int\limits_{t_1}^t \|p(\cdot,s)\|_{L^{\frac{3q}{q+1}}(\mathbb{R}^3)}\||v|^{\frac{q-2}{2}}(\cdot,s)\|_{L^{\frac{6q}{q-2}}(\mathbb{R}^3)}\|\nabla(|v|^{\frac{q}{2}})(\cdot,s)\|_{L^{2}(\mathbb{R}^3)}ds\\
 &\leq C_{1,univ}\int\limits_{t_1}^t \|p(\cdot,s)\|_{L^{\frac{3q}{q+1}}(\mathbb{R}^3)}\|\nabla(|v|^{\frac{q}{2}})(\cdot,s)\|_{L^{2}(\mathbb{R}^3)}^{2-\frac{2}{q}} ds.
 \end{split}
 \end{align}
 Now we apply Proposition \ref{interpolativepropertyLorentz} to $\|p(\cdot,s)\|_{L^{\frac{3q}{q+1}}(\mathbb{R}^3)}$. Together with the assumption \eqref{pressuretypeI} and $q\in (2,3)$, this gives that for $s\in (0,T^*)$:
 \begin{equation}\label{presestinterpolation}
 \|p(\cdot,s)\|_{L^{\frac{3q}{q+1}}(\mathbb{R}^3)}\leq\Big(\frac{2q+2}{q-1}\Big)^{\frac{q+1}{3q}}\|p(\cdot,s)\|_{L^{\frac{3}{2},\infty}(\mathbb{R}^3)}^{\frac{1}{2}}\|p(\cdot,s)\|_{L^{\frac{3q}{2}}(\mathbb{R}^3)}^{\frac{1}{2}}\leq  6^{\frac{1}{2}}M\|p(\cdot,s)\|_{L^{\frac{3q}{2}}(\mathbb{R}^3)}^{\frac{1}{2}}.
 \end{equation}
 Next,  note that the associated pressure has the form $p=\mathcal{R}_{i}\mathcal{R}_{j}(v_{i}v_{j})$.
Here, $R=(R_{\alpha})_{\alpha=1,\ldots 3}$ is the Riesz transform and we utilize the Einstein summation convention.
Using this, Calder\'{o}n-Zygmund estimates, \eqref{sobembeddingvq} and $q\in (2,3)$, we see that there exists a universal constant $C_{2,univ}$ (independent of $q$) such that
$$\|p(\cdot,s)\|_{L^{\frac{3q}{2}}(\mathbb{R}^3)}^{\frac{1}{2}}\leq C_{2,univ}\|\nabla(|v|^{\frac{q}{2}})(\cdot,s)\|_{L^{2}(\mathbb{R}^3)}^{\frac{2}{q}}. $$
Combining this with \eqref{prestermholder}-\eqref{presestinterpolation} gives that there exists a positive universal constant $C_{3,univ}$ such that
\begin{equation}\label{pressureintegmainest}
2(q-2)\int\limits_{t_1}^t\int\limits_{\R^3}|p||v|^{\frac{q-2}{2}}|\nabla(|v|^{\frac{q}{2}})|\, dxds\leq C_{3,univ}M(q-2) \int\limits_{t_1}^t\int\limits_{\R^3}|\nabla|v|^{\frac q2}|^2\, dxds.
\end{equation}
Utilizing this and \eqref{e.enerbal2} now gives that for all $t\in (t_1,T)$:
\begin{align}\label{energybalpenultimate}
\begin{split}
&\int\limits_{\R^3}|v(x,t)|^q\, dx+\frac{4(q-1)}{q}\int\limits_{t_1}^t\int\limits_{\R^3}|\nabla|v|^{\frac q2}|^2\, dxds\\
&\leq\int\limits_{\R^3}|v(x,t_1)|^q\, dx+C_{3,univ}M{(q-2)}\int\limits_{t_1}^t\int\limits_{\R^3}|\nabla|v|^{\frac q2}|^2\, dxds.
\end{split} 
\end{align}
Define $C^{(0)}_{univ}:=\frac{1}{C_{3,univ}}$ and fix 
\begin{equation}\label{qdef}
q:=2+\frac{C^{(0)}_{univ}}{M}.
\end{equation} 
Clearly we 
have
$$C_{3,univ}M(q-2)\leq \frac{4(q-1)}{2q}. $$
Thus, taking $q$ as defined in \eqref{qdef}, we see that \eqref{energybalpenultimate} gives that for all $t\in (t_1,T)$
\begin{align}\label{energybalfinal}
\begin{split}
&\int\limits_{\R^3}|v(x,t)|^q\, dx+\frac{4(q-1)}{2q}\int\limits_{t_1}^t\int\limits_{\R^3}|\nabla|v|^{\frac q2}|^2\, dxds\\
&\leq\int\limits_{\R^3}|v(x,t_1)|^q\, dx.
\end{split}
\end{align}
From this we readily obtain the desired conclusion \eqref{higherintegrable}. 

%\end{proof}
\subsection{Proof of Corollary \ref{higherintegtypeI}}
%\begin{proof}
Let us take any $t_{2}\in (0,T^*)$. We then take $t_{1}\in (0,t_{2})$.
The associated pressure to $v$ has the form $p=\mathcal{R}_{i}\mathcal{R}_{j}(v_{i}v_{j})$.
Here, $R=(R_{\alpha})_{\alpha=1,\ldots 3}$ is the Riesz transform and we utilize the Einstein summation convention.
Using this, \eqref{typeIhigherinteg}  and known Calder\'{o}n-Zygmund estimates for singular integrals of convolution type, we infer the following. Namely there exists a positive universal constant $C_{4,univ}$ such that
$$\|p\|_{L^{\infty}((0,T^*);L^{\frac{3}{2},\infty}(\mathbb{R}^3))}\leq C_{4,univ}M^2. $$
This allows us to apply Theorem \ref{higherintegpres}. In particular, there exists a positive universal constant $C_{5,univ}$ such that for 
\begin{equation}\label{qdefrecall}
q=2+\frac{C_{5,univ}}{M} 
\end{equation}
the following holds true. Namely,
\begin{equation}\label{higherintegrablerecall}
\||v|^{\frac{q}{2}}\|_{L^{\infty}((t_1,T^*); L^{2}(\mathbb{R}^3))}+\int\limits_{t_1}^{T^*}\int\limits_{\mathbb{R}^3} |\nabla (|v|^{\frac{q}{2}})|^2 dxds<\infty.
\end{equation}
By the Sobolev embedding theorem, we infer that
$$|v|^{\frac{q}{2}}\in L^{2}((t_{1},T^*); L^{6}(\mathbb{R}^3)). $$
This implies that
\begin{equation}\label{higherintegvsobolev}
v\in L^{2}((t_{1},T^*); L^{6+\frac{3C_{5,univ}}{M}}(\mathbb{R}^3)).
\end{equation}
Define
\begin{equation}\label{mdef}
m:=4+\frac{3C_{5,univ}}{6M+3C_{5,univ}}
\end{equation}
and
\begin{equation}\label{thetadef}
\theta:=\frac{2}{m}.
\end{equation}
With these definitions, we see that
\begin{equation}\label{indicesinterpolation1}
\frac{1}{m}=\frac{\theta}{6+\frac{3C_{5,univ}}{M}}+\frac{1-\theta}{3}.
\end{equation}
Applying Proposition \ref{interpolativepropertyLorentz}, we see that for $t\in (t_1,T^*)$ and $M$ sufficiently large:
$$\|v(\cdot,t)\|_{L^{m}(\mathbb{R}^3)}\leq {10}^{\frac{1}{4}}\|v(\cdot,t)\|_{L^{3,\infty}(\mathbb{R}^3)}^{1-\theta}\|v(\cdot,t)\|^{\theta}_{L^{6+\frac{3C_{5,univ}}{M}}(\mathbb{R}^3)}\leq 10^{\frac{1}{4}}M^{1-\theta} \|v(\cdot,t)\|^{\frac{2}{m}}_{L^{6+\frac{3C_{5,univ}}{M}}(\mathbb{R}^3)}.$$
This, together with \eqref{higherintegvsobolev} implies that
\begin{equation}\label{vL4plus}
|v|^2\in L^{2+\frac{3C_{5,univ}}{12M+6C_{5,univ}}}(\mathbb{R}^3\times (t_1,T^*)).
\end{equation}
For $t\in (t_1,\infty)$, it is known the $v$ can be represented in the following way:
\begin{align}\label{vduhamel}
\begin{split}
v(\cdot,t)&= e^{(t-t_1)\Delta}v(\cdot,t_1)+\int\limits_{0}^{t-t_1} e^{(t-t_1-s)\Delta}\mathbb{P}\partial_{i}(v_{i}(\cdot,s)v_{j}(\cdot,s)) ds\\
&=e^{(t-t_1)\Delta}v(\cdot,t_1)+\partial_{i}\Big(\int\limits_{0}^{t-t_1} e^{(t-t_1-s)\Delta}\mathbb{P}(v_{i}(\cdot,s)v_{j}(\cdot,s)) ds\Big).
\end{split} 
\end{align}
Here,  we adopt the Einstein summation convention and $\mathbb{P}$ is the Leray projector onto divergence-free vector fields. Using maximal regularity for the heat equation (see Theorem 10.12 of \cite{brezis2010}), together with \eqref{vL4plus}, we see that
\begin{equation}\label{gradientnonlinL4+}
\nabla\Big(\int\limits_{0}^{t-t_1} e^{(t-t_1-s)\Delta}\mathbb{P}\partial_{i}(v_{i}(\cdot,s)v_{j}(\cdot,s)) ds\Big)\in L^{2+\frac{3C_{5,univ}}{12M+6C_{5,univ}}}(\mathbb{R}^3\times (t_1,T^*)).
\end{equation}
Since $v$ is a weak Leray-Hopf solution, we have that $v(\cdot,t_{1})\in L^{2}(\mathbb{R}^3)$. Noting that $t_{2}\in(t_{1}, T^*)$, it is clear that
$$\nabla e^{(t-t_1)\Delta}v(\cdot,t_1)\in L^{2+\frac{3C_{5,univ}}{12M+6C_{5,univ}}}(\mathbb{R}^3\times (t_2,T^*)). $$
Combining this with \eqref{gradientnonlinL4+} gives the desired conclusion. 
%\end{proof}

\section{Quantified finite number of singular points } 
The purpose of this section is to prove Theorem \ref{finiteTypeItimeslicesing}. First we state a known $\varepsilon$-regularity criteria for the Navier-Stokes equations. 

\begin{pro}[\cite{CKN} and {\cite[Theorem 30.1]{LR02}}]\label{CKN}
There exists absolute constants $\ep_{0}^{*}>0$ and $C_{CKN}\in(0,\infty)$ such that if $(v,p)$ is a suitable weak solution to the Navier-Stokes equations on $Q(0,1)$ and for some $\ep_{0}\leq \ep_{0}^{*}$ 
\begin{equation}\label{CKNsmallness}
\int\limits_{Q(0,1)} |v|^3+|p|^{\frac{3}{2}} dxdt\leq \ep_{0}
\end{equation}
then  one concludes that
 $v\in L^{\infty}(Q(0,\tfrac{1}{2})).$
%\end{equation}
\end{pro}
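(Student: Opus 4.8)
The plan is to establish this $\varepsilon$-regularity statement by the classical scaling-and-iteration scheme for suitable weak solutions, whose engine is the local energy inequality that is part of the definition of such solutions. First I would attach to each parabolic cylinder $Q(z_0,r)$ the scale-invariant quantities
\[
A(r)=\sup_{t_0-r^2<t<t_0}\frac{1}{r}\int_{B(x_0,r)}|v|^2\,dx,\qquad E(r)=\frac{1}{r}\int_{Q(z_0,r)}|\nabla v|^2\,dxdt,
\]
\[
C(r)=\frac{1}{r^2}\int_{Q(z_0,r)}|v|^3\,dxdt,\qquad D(r)=\frac{1}{r^2}\int_{Q(z_0,r)}|p|^{\frac32}\,dxdt,
\]
each invariant under the rescaling \eqref{eqrescaling}. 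In this language the hypothesis \eqref{CKNsmallness} is $C(1)+D(1)\leq\ep_0$, and the objective is to show that $C(r)+D(r)$ decays like a positive power of $r$, uniformly over centres $z_0\in Q(0,\tfrac12)$; such a Morrey-type decay is well known to force local boundedness (indeed H\"older continuity) of $v$ on $Q(0,\tfrac12)$.

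The core of the argument is an excess-decay lemma: there exist $\theta\in(0,1)$ and $\alpha>0$ so that, whenever $C(r)+D(r)\leq\ep_0$, one has $C(\theta r)+D(\theta r)\leq\theta^{\alpha}\big(C(r)+D(r)\big)$. I would prove it in two moves. First, insert suitable space-time cutoffs into the local energy inequality to bound the energy quantities $A$ and $E$ at an inner scale by the cubic and pressure quantities at the outer scale, and then use the Sobolev and Gagliardo--Nirenberg inequalities to re-estimate $\int|v|^3$ in terms of $A$ and $E$; this closes the velocity part. Second, to control the pressure I would exploit $-\Delta p=\partial_i\partial_j(v_iv_j)$ and split $p=p_{\mathrm{loc}}+p_{\mathrm{harm}}$ on each ball, bounding $p_{\mathrm{loc}}$ by $\int|v|^2$ through Calder\'on--Zygmund estimates while using interior estimates for the harmonic remainder $p_{\mathrm{harm}}$ to make $D$ contract under rescaling. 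Choosing $\theta$ small and absorbing, these combine into the claimed decay; iterating gives $C(\theta^k)+D(\theta^k)\to0$ geometrically, hence $C(r)+D(r)\lesssim r^{\alpha}$, and a parabolic Morrey/Campanato embedding (or a short bootstrap of the Stokes system with the now H\"older-controlled forcing $v\cdot\nabla v$) then upgrades this to $v\in L^{\infty}(Q(0,\tfrac12))$.

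The main obstacle will be the pressure, precisely because it is nonlocal: the harmonic part $p_{\mathrm{harm}}$ on a small ball carries information inherited from all larger scales, so the contraction of $D$ cannot be read off from the equation on a single cylinder but must be extracted from the decomposition together with a careful absorption of the resulting cross terms into the left-hand side. A secondary technical point is calibrating the interpolation exponents in the first move so that the combined estimate genuinely gains a factor $\theta^{\alpha}$ rather than merely reproducing the same quantity; this is exactly where the smallness $\ep_0$ is spent. Should the bookkeeping of the decay prove cumbersome, an alternative is a compactness-contradiction argument: assuming the conclusion fails along a sequence $(v_n,p_n)$ with $\int_{Q(0,1)}|v_n|^3+|p_n|^{\frac32}\to0$, one uses the energy inequality and the equation to pass to a strongly convergent subsequence whose limit solves a trivial linear problem, contradicting the persistence of a singularity; the same nonlocality of the pressure reappears there as the need to justify the convergence of $p_n$.
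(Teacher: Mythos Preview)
Your outline is a faithful sketch of the standard Caffarelli--Kohn--Nirenberg iteration (with the Lin/Ladyzhenskaya--Seregin presentation of the pressure decomposition), and the obstacles you flag---the nonlocality of the harmonic pressure part and the calibration of interpolation exponents so as to genuinely gain a power of $\theta$---are indeed the places where the work lies. Nothing in the proposal is wrong at the level of strategy.

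That said, you should be aware that the paper does not prove this proposition at all: it is stated as a known result and attributed to \cite{CKN} and \cite[Theorem~30.1]{LR02}, then used as a black box in the proof of Theorem~\ref{finiteTypeItimeslicesing}. So there is no ``paper's own proof'' to compare against; your proposal simply supplies what the paper deliberately omits. If your goal is to match the paper, a one-line citation suffices; if your goal is to make the paper self-contained, your sketch is a reasonable roadmap, though carrying it out in full detail is a nontrivial undertaking (several pages in any of the standard references) and would be out of proportion with the rest of the paper.
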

Before proving Theorem \ref{finiteTypeItimeslicesing}, we also need to state and prove a lemma below.
\begin{lemma}\label{energyboundweakL3}
There exists a positive universal constants $C^{(3)}_{univ}-C^{(5)}_{univ}$ such that the following holds true for every $M\geq 1$.

Let $v:\mathbb{R}^3\times (0,1)\rightarrow\mathbb{R}^3$ be a weak Leray-Hopf solution to the Navier-Stokes equations, with initial data $v_{0}$ and associated pressure $p:\mathbb{R}^3\times (0,1)\rightarrow \mathbb{R}$.

Suppose that
\begin{equation}\label{vfirstblowupenergybound}
v\in L^{\infty}([0,1); L^{\infty}(\mathbb{R}^3))
\end{equation}
and 
\begin{equation}\label{initialdataweakL3}
\|v_{0}\|_{L^{3,\infty}(\mathbb{R}^3)}\leq M.
\end{equation}
Define
$$ u(\cdot,t):= v(\cdot,t)-e^{t\Delta}v_{0}.$$
Then the above assumptions imply that
\begin{equation}\label{perturbationenergyest}
\|u\|^{2}_{L^{\infty}(0,1; L^{2}(\mathbb{R}^3))}+\int\limits_{0}^{1}\int\limits_{\mathbb{R}^3} |\nabla u|^2 dxds\leq C^{(3)}_{univ}M^{12},
\end{equation}
\begin{equation}\label{L103}
\|v\|_{L^{\frac{10}{3}}(\mathbb{R}^3\times (0,1))}\leq C^{(4)}_{univ}M^{6}
\end{equation} 
and
\begin{equation}\label{presL53}
\|p\|_{L^{\frac{5}{3}}(\mathbb{R}^3\times (0,1))}\leq C^{(5)}_{univ}M^{12}
\end{equation}
\end{lemma}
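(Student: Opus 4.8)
The plan is to reduce everything to the energy estimate \eqref{perturbationenergyest} for the perturbation $u:=v-e^{t\Delta}v_{0}$, and then to read off \eqref{L103} and \eqref{presL53} by interpolation and Calder\'{o}n--Zygmund theory. First I would record linear estimates for $a:=e^{t\Delta}v_{0}$. Writing $a(\cdot,t)=G_{t}\ast v_{0}$ with $G_{t}$ the heat kernel (so that $\|G_{t}\|_{L^{p,1}(\mathbb{R}^3)}\simeq t^{-\frac{3}{2}(1-\frac1p)}$), Proposition~\ref{O'Neilchapter2} together with \eqref{initialdataweakL3} yields, for each $q\in(3,\infty)$,
\[
\|a(\cdot,t)\|_{L^{q}(\mathbb{R}^3)}\leq C_{q}\,t^{-\frac{3}{2}(\frac13-\frac1q)}M,\qquad \|a(\cdot,t)\|_{L^{3,\infty}(\mathbb{R}^3)}\leq CM,
\]
the last bound following from the elementary $L^{3,\infty}$-boundedness of the heat semigroup. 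Choosing $q=4$ gives $\int_{0}^{1}\|a(\cdot,t)\|_{L^4}^{4}\,dt\lesssim M^{4}\int_{0}^{1}t^{-1/2}\,dt\lesssim M^{4}$, and likewise $\|a\|_{L^{10/3}(\mathbb{R}^3\times(0,1))}\lesssim M$, both integrals being convergent.

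Next I would derive the energy identity for $u$. Since $\nabla\cdot u=\nabla\cdot a=0$ and $u(\cdot,0)=0$, testing the equation for $u$ against $u$ and using that $\int(u\cdot\nabla)u\cdot u$, $\int(a\cdot\nabla)u\cdot u$ and the pressure term all vanish, one obtains
\[
\tfrac12\tfrac{d}{dt}\|u\|_{L^2}^{2}+\|\nabla u\|_{L^2}^{2}=\int_{\mathbb{R}^3}u_{i}a_{j}\,\partial_{i}u_{j}\,dx+\int_{\mathbb{R}^3}a_{i}a_{j}\,\partial_{i}u_{j}\,dx.
\]
The second (forcing) term is subcritical: bounding it by $\|a\|_{L^4}^{2}\|\nabla u\|_{L^2}\leq\tfrac14\|\nabla u\|_{L^2}^{2}+\|a\|_{L^4}^{4}$ and integrating contributes only $\lesssim M^{4}$ after absorbing a quarter of the dissipation.

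The crux is the first term $\int u_{i}a_{j}\partial_{i}u_{j}$. Using H\"{o}lder and the sharp Sobolev embedding $\dot{H}^{1}\hookrightarrow L^{6,2}$ tested against $\|a\|_{L^{3,\infty}}\lesssim M$, this term is bounded by $CM\|\nabla u\|_{L^2}^{2}$. For large $M$ the factor $CM$ exceeds $1$, so it cannot be absorbed into the dissipation, and a na\"{i}ve Gr\"{o}nwall argument only produces a bound that is exponential in $M$. This borderline, scale-invariant interaction is the main obstacle: it reflects the fact that, unlike for genuine $L^{3}$ data, every critical space-time norm of $e^{t\Delta}v_{0}$ (for instance $L^{5}(\mathbb{R}^3\times(0,1))$) diverges logarithmically in time when $v_{0}\in L^{3,\infty}$, so it cannot be made small on a short initial interval. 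To bypass it and reach the \emph{polynomial} bound \eqref{perturbationenergyest}, I would split $v_{0}=v_{0}\mathbf{1}_{\{|v_0|>N\}}+v_{0}\mathbf{1}_{\{|v_0|\leq N\}}=:\bar v_{0}+c_{0}$; a direct computation with the distribution function gives $\|\bar v_{0}\|_{L^2}^{2}\lesssim M^{3}/N$ while $\|c_{0}\|_{L^\infty}\leq N$ and $\|c_{0}\|_{L^{3,\infty}}\lesssim M$. Running the energy estimate with $a=e^{t\Delta}\bar v_{0}+e^{t\Delta}c_{0}$, the bounded part is controlled through its uniform $L^{p}$ ($p>3$) bound and the small-energy part through its $L^{2}$-smallness, and optimising $N$ as a power of $M$ closes the estimate at the level $\|u\|_{L^\infty(0,1;L^2)}^{2}+\|\nabla u\|_{L^2(\mathbb{R}^3\times(0,1))}^{2}\lesssim M^{12}$. (Equivalently, one may invoke the known global a priori bounds for $L^{3,\infty}$ data and, using $v\in L^{\infty}([0,1);L^{\infty}(\mathbb{R}^3))$ from \eqref{vfirstblowupenergybound}, weak--strong uniqueness to identify $v$ with the solution carrying those bounds.)

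Finally, \eqref{L103} follows from \eqref{perturbationenergyest} by the standard interpolation $\|u\|_{L^{10/3}(\mathbb{R}^3\times(0,1))}\lesssim\|u\|_{L^{\infty}(0,1;L^2)}^{2/5}\|\nabla u\|_{L^{2}(\mathbb{R}^3\times(0,1))}^{3/5}\lesssim M^{6}$, combined with $\|a\|_{L^{10/3}}\lesssim M$ and $v=u+a$. For \eqref{presL53}, since $p=\mathcal{R}_{i}\mathcal{R}_{j}(v_{i}v_{j})$ and $v\otimes v\in L^{5/3}$ precisely when $v\in L^{10/3}$, Calder\'{o}n--Zygmund estimates give $\|p\|_{L^{5/3}(\mathbb{R}^3\times(0,1))}\lesssim\|v\|_{L^{10/3}(\mathbb{R}^3\times(0,1))}^{2}\lesssim M^{12}$. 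The single genuinely hard point is the borderline term in the third paragraph; everything else is bookkeeping with the scaling of the heat flow and Calder\'{o}n--Zygmund theory.
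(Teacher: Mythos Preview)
Your proposal is correct and aligns with the paper's proof. The paper does not rederive \eqref{perturbationenergyest} from scratch but simply invokes Lemma~3.4 of \cite{barkersersve}, which furnishes an energy bound for $u$ depending on a free parameter $N$ (with a term $N^{-1}M^{3}$ and a factor $\exp(CN^{1/2}M^{9/2})$), and then sets $N=M^{-9}$ to obtain $M^{12}$; this is exactly the ``known global a~priori bounds for $L^{3,\infty}$ data'' alternative you mention in parentheses, and the level-set splitting $v_{0}=v_{0}\mathbf{1}_{\{|v_{0}|>N\}}+v_{0}\mathbf{1}_{\{|v_{0}|\leq N\}}$ you outline is precisely the mechanism behind that lemma. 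Your derivations of \eqref{L103} via the interpolation $\|u\|_{L^{10/3}}\lesssim\|u\|_{L^{\infty}_{t}L^{2}_{x}}^{2/5}\|\nabla u\|_{L^{2}_{x,t}}^{3/5}$ together with the heat-flow bound $\|e^{s\Delta}v_{0}\|_{L^{10/3}}\lesssim Ms^{-1/20}$, and of \eqref{presL53} via Calder\'{o}n--Zygmund on $p=\mathcal{R}_{i}\mathcal{R}_{j}(v_{i}v_{j})$, match the paper's argument line for line.
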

\begin{proof}
Utilizing Lemma 3.4 of \cite{barkersersve} we see that for all $N>0$ we have
\begin{align}\label{perturbationenergyN}
\begin{split}
&\|u\|^{2}_{L^{\infty}(0,1; L^{2}(\mathbb{R}^3))}+\int\limits_{0}^{1}\int\limits_{\mathbb{R}^3} |\nabla u|^2 dxds\\&\leq C_{6,univ}(N^{-1}\|v_{0}\|_{L^{3,\infty}(\mathbb{R}^3)}^3+N^{\frac{2}{5}}\|v_{0}\|_{L^{3,\infty}(\mathbb{R}^3)}^{\frac{18}{5}})\\
&+C_{6,univ}\exp(C_{6,univ}N^{\frac{1}{2}}\|v_{0}\|_{L^{3,\infty}(\mathbb{R}^3)}^{\frac{9}{2}})(N^{-\frac{1}{2}}\|v_{0}\|_{L^{3,\infty}(\mathbb{R}^3)}^{\frac{33}{8}}+N^{\frac{9}{10}}\|v_{0}\|_{L^{3,\infty}(\mathbb{R}^3)}^{\frac{199}{40}})\\
&\leq C_{6,univ}(N^{-1}M^3+N^{\frac{2}{5}}M^{\frac{18}{5}})+C_{6,univ}\exp(C_{6,univ}N^{\frac{1}{2}}M^{\frac{9}{2}})(N^{-\frac{1}{2}}M^{\frac{33}{8}}+N^{\frac{9}{10}}M^{\frac{199}{40}}).
\end{split}
\end{align}
Choosing $N=M^{-9}$ then immediately gives \eqref{perturbationenergyest}.

Next we prove \eqref{L103}. Using Lebesgue interpolation and the Sobolev inequality, we have that for $t\in (0,1)$
$$\|u(\cdot,t)\|_{L^{\frac{10}{3}}(\mathbb{R}^3)}^{\frac{10}{3}}\leq \|u(\cdot,t)\|_{L^{6}(\mathbb{R}^3)}^{2}\|u(\cdot,t)\|_{L^{2}(\mathbb{R}^3)}^{\frac{4}{3}}\leq C_{7,univ}\|\nabla u(\cdot,t)\|_{L^{2}(\mathbb{R}^3)}^2 \|u(\cdot,t)\|_{L^{2}(\mathbb{R}^3)}^{\frac{4}{3}}.$$
Using \eqref{perturbationenergyest}, we have
\begin{equation}\label{perturbationL103}
\int\limits_{0}^{1}\int\limits_{\mathbb{R}^3} |u|^{\frac{10}{3}} dxds\leq C_{7,univ}\int\limits_{0}^{1}\int\limits_{\mathbb{R}^3} |\nabla u|^2 dxds \|u\|^{\frac{4}{3}}_{L^{\infty}(0,1; L^{2}(\mathbb{R}^3))}\leq C_{7,univ}M^{20}.
\end{equation}
Using \eqref{initialdataweakL3} and O'Neil's convolution inequality gives that for any $s\in (0,\infty):$
$$\|e^{s\Delta} v_{0}\|_{L^{\frac{10}{3}}(\mathbb{R}^3)}\leq \frac{C_{8,univ} M}{s^{\frac{1}{20}}}.  $$
Thus,
\begin{equation}\label{heatL103}
\int\limits_{0}^{1}\int\limits_{\mathbb{R}^3} |e^{s\Delta}u_{0}|^{\frac{10}{3}} dxds\leq C_{9,univ}M^{\frac{10}{3}}.
\end{equation}
Combining this with \eqref{perturbationL103} gives the desired conclusion \eqref{L103} for $M\geq 1$. 

Finally, since the associated pressure $p$ to $v$ is a Calder\'{o}n-Zygmund operator acting on $v\otimes v$, we see that \eqref{L103} implies \eqref{presL53}.
\end{proof}
\subsection{Proof of Theorem \ref{finiteTypeItimeslicesing}}
Let $\{x_{1},x_2\ldots x_{L}\}\subset \sigma$. Where $\sigma$ is defined as in \eqref{sigmadef}. In what follows, we will show that necessarily,
\begin{equation}\label{pupper}
L\leq C^{(2)}_{univ}M^{20}.
\end{equation}
This is sufficient to infer the conclusion of Theorem \ref{finiteTypeItimeslicesing}.

Regarding $\{x_{1}, x_{2},\ldots x_{L}\}\subset\sigma$, since $s_{n}\uparrow 0$, there exists $n=n(x_{1},\ldots x_{L})>0$ such that
\begin{equation}\label{separatesing}
\min_{\{(i,j)\in \{1,2,\ldots L\}^2: i\neq j\}} |x_{i}-x_{j}|\geq 2(-s_{n})^{\frac{1}{2}}. 
\end{equation}
Now we perform the Navier-Stokes rescaling
\begin{equation}\label{rescalefinitesingpoints} (\tilde{v}(x,t), \tilde{p}(x,t))= (\lambda v(\lambda x, \lambda ^2 t),\lambda^2 p(\lambda x, \lambda ^2 t))\,\,\,\textrm{with}\,\,\,\lambda:=(-s_{n})^{\frac{1}{2}}<1.
\end{equation}
Here, $\tilde{v}:\mathbb{R}^3\times (\frac{1}{s_{n}},\infty)\rightarrow\mathbb{R}^3$ and $p:\mathbb{R}^3\times (\frac{1}{s_{n}},\infty)\rightarrow\mathbb{R}$.

By \eqref{typeItimeslice}, we see that
\begin{equation}\label{rescaleminus1}
\|\tilde{v}(\cdot,-1)\|_{L^{3,\infty}(\mathbb{R}^3)}\leq M.
\end{equation} 
Hence, using Lemma \ref{energyboundweakL3} we see that
\begin{equation}\label{vtildeL103}
\int\limits_{-1}^{0}\int\limits_{\mathbb{R}^3} |\tilde{v}|^{\frac{10}{3}}+|\tilde{p}|^{\frac{5}{3}} dxds\leq C_{10,univ} M^{20}.
\end{equation}

Next define
\begin{equation}\label{yidef}
y_{i}:=\frac{x_{i}}{(-s_{n})^{\frac{1}{2}}}\,\,\,\textrm{for}\,\,\,i\in\{1,\ldots L\}.
\end{equation}
Then, 
\begin{equation}\label{yising}
\{(y_{i},0): i=1\ldots L\}\,\,\textrm{are}\,\,\textrm{singular}\,\,\textrm{points}\,\,\textrm{of}\,\, \tilde{v}.
\end{equation}
Using \eqref{separatesing}, we have that
\begin{equation}\label{disjointjballs}
B(y_{i}, 1)\cap B(y_{j}, 1)=\emptyset\,\,\,\forall i\neq j\in\{1,\ldots L\}.
\end{equation}
Utilizing Proposition \ref{CKN}, together with \eqref{yising}, we see that there exists a positive universal constant $\varepsilon^{*}_{0}$ such that
\begin{equation}\label{vtildeconversewolf}
\int\limits_{-1}^{0}\int\limits_{B(y_{i},1)} |\tilde{v}|^{\frac{10}{3}}+|\tilde{p}|^{\frac{5}{3}} dxds\geq\varepsilon^{*}_{0}\,\,\,\forall i\in\{1,\ldots,L\}.
\end{equation}
Summing this over $i=1\ldots L$, together with the use of \eqref{vtildeL103} and \eqref{disjointjballs}, gives
$$\varepsilon^{*}_{0}L\leq \sum_{i=1}^{L} \int\limits_{-1}^{0}\int\limits_{B(y_{i},1)} |\tilde{v}|^{\frac{10}{3}}+|\tilde{p}|^{\frac{5}{3}} dxds=\int\limits_{-1}^{0}\int\limits_{\cup_{i=1}^{L}B(y_{i},1)} |\tilde{v}|^{\frac{10}{3}} +|\tilde{p}|^{\frac{5}{3}} dxds\leq C_{10,univ} M^{20}. $$
Thus, $$L\leq \frac{C_{10,univ} M^{20}}{\varepsilon^{*}_{0}}$$ as required.  
\begin{remark}
The observant reader will notice that, under the assumptions of Theorem \ref{finiteTypeItimeslicesing}, $M^{20}$ is not an optimal bound for the number of singular points of $v$ at $t=0$. In the above proof of Theorem \ref{finiteTypeItimeslicesing}, it suffices to estimate global bounds for the $L^{s}_{x,t}$ space-time norm of $v$ for any $s\in (3,\frac{10}{3}]$. Using Lemma \ref{energyboundweakL3} to estimate this quantity then implies that the number of singular points at $t=0$ can be bounded by
\begin{equation}\label{improvedbound}
C_{s}M^{6s}\,\,\,\textrm{for}\,\,\textrm{any}\,\,s\in (3,\tfrac{10}{3}].
\end{equation}

If instead of (13), we assume the stronger assumption that
\begin{equation}\label{typeIfinal}
\|v\|_{L^{\infty}(-1,0; L^{3,\infty}(\mathbb{R}^3))}\leq M
\end{equation}
then the power of $M$ in \eqref{improvedbound} can be further improved. Following the energy estimates of \cite{tao} and \cite{barkerprange2020}, we can show that under the assumption \eqref{typeIfinal},  the number of singular points of $v$ at $t=0$ is at most
$$C_{s}M^{2s}\,\,\,\textrm{for}\,\,\textrm{any}\,\,s\in (3,\tfrac{10}{3}].  $$

\end{remark}

\bibliography{refs}        
\bibliographystyle{plain}

\end{document}